\newtheorem{teo}{Theorem}[section]
\newtheorem{lem}[teo]{Lemma}
\newtheorem{prop}[teo]{Proposition}
\newcommand\restrict[1]{\raisebox{-.5ex}{$|$}_{#1}}
\theoremstyle{remark}
\newtheorem{oss}[teo]{Remark}
\theoremstyle{definition}
\def \R {{\mathbb {R}}}
\DeclareMathOperator*{\diver}{div}
\newcommand{\average}{{\mathchoice {\kern1ex\vcenter{\hrule height.4pt
width 6pt
depth0pt} \kern-9.7pt} {\kern1ex\vcenter{\hrule height.4pt width 4.3pt
depth0pt}
\kern-7pt} {} {} }}
\def\p{\partial}
\def\vint_#1{\mathchoice%
          {\mathop{\kern 0.2em\vrule width 0.6em height 0.69678ex depth -0.58065ex
                  \kern -0.8em \intop}\nolimits_{\kern -0.4em#1}}%
          {\mathop{\kern 0.1em\vrule width 0.5em height 0.69678ex depth -0.60387ex
                  \kern -0.6em \intop}\nolimits_{#1}}%
          {\mathop{\kern 0.1em\vrule width 0.5em height 0.69678ex
              depth -0.60387ex
                  \kern -0.6em \intop}\nolimits_{#1}}%
          {\mathop{\kern 0.1em\vrule width 0.5em height 0.69678ex depth -0.60387ex
                  \kern -0.6em \intop}\nolimits_{#1}}}
\def\vintslides_#1{\mathchoice%
          {\mathop{\kern 0.1em\vrule width 0.5em height 0.697ex depth -0.581ex
                  \kern -0.6em \intop}\nolimits_{\kern -0.4em#1}}%
          {\mathop{\kern 0.1em\vrule width 0.3em height 0.697ex depth -0.604ex
                  \kern -0.4em \intop}\nolimits_{#1}}%
          {\mathop{\kern 0.1em\vrule width 0.3em height 0.697ex depth -0.604ex
                  \kern -0.4em \intop}\nolimits_{#1}}%
          {\mathop{\kern 0.1em\vrule width 0.3em height 0.697ex depth -0.604ex
                  \kern -0.4em \intop}\nolimits_{#1}}}
\newcommand{\kintint}[2]{\mathchoice%
          {\mathop{\kern 0.2em\vrule width 0.6em height 0.69678ex depth -0.58065ex
                  \kern -0.8em \intop}\nolimits_{\kern -0.45em#1}^{#2}}%
          {\mathop{\kern 0.1em\vrule width 0.5em height 0.69678ex depth -0.60387ex
                  \kern -0.6em \intop}\nolimits_{#1}^{#2}}%
          {\mathop{\kern 0.1em\vrule width 0.5em height 0.69678ex depth -0.60387ex
                  \kern -0.6em \intop}\nolimits_{#1}^{#2}}%
          {\mathop{\kern 0.1em\vrule width 0.5em height 0.69678ex depth -0.60387ex
                  \kern -0.6em \intop}\nolimits_{#1}^{#2}}}
\renewcommand{\div}{\operatorname{div}}
\def\cleardoublepage{\clearpage\if@twoside \ifodd\c@page\else
\hbox{}
\thispagestyle{empty}
\newpage
\if@twocolumn\hbox{}\newpage\fi\fi\fi}
\title{A new glance to the Alt-Caffarelli-Friedman monotonicity formula}
\author{Fausto Ferrari}
\address{Fausto Ferrari: Dipartimento di Matematica\\ Universit\`a di Bologna\\ Piazza di Porta S.Donato 5\\ 40126, Bologna-Italy}
\email{fausto.ferrari@unibo.it }
\author{Nicol\`o Forcillo}
\address{Nicol\`o Forcillo: Dipartimento di Matematica\\ Universit\`a di Bologna\\ Piazza di Porta S.Donato 5\\ 40126, Bologna-Italy}
\email{nicolo.forcillo2@unibo.it }
\thanks{F.F.  and N.F. are partially supported by INDAM-GNAMPA-2019 projet: {\it Propriet\`a di regolarit\`a delle soluzioni viscose con applicazioni a problemi di frontiera libera.}}
\date{\today}
\begin{document}

\begin{abstract}
In this paper we revisit the proof of the Alt-Caffarelli-Friedman monotonicity formula. Then, in the framework of the Heisenberg group, we discuss the existence of an analogous monotonicity formula introducing a necessary condition for its existence, recently proved in \cite{FeFo}.
\end{abstract}
\maketitle
\section{Introduction}

The Alt-Caffarelli-Friedman monotonicity formula was introduced in \cite{ACF} as a fundamental tool for studying the main properties of the solutions of two-phase free boundary problems.

Roughly saying, following \cite{ACF}, the result says that there exists $r_0>0$ such that for every non-negative $u_1,u_2\in C(B_1(0))\cap H^1(B_1(0)),$ if $0\in \mathcal{F}(u_i),$ $\Delta u_i\geq 0,$ $i=1,2,$ $u_1(0)=u_2(0)=0$ and $u_1u_2=0$ in $B_1(0),$ where  $B_1(0)$ is the Euclidean ball centered at $0$ of radius $1$ in $\mathbb{R}^{n},$   then
\begin{equation}\label{definition-Phi}
\Phi(r):=r^{-4}\int_{B_r(0)}\frac{|\nabla u_1(x)|^2}{|x|^{n-2}}dx\int_{B_r(0)}\frac{|\nabla u_2(x)|^2}{|x|^{n-2}}dx
\end{equation}
is well defined, bounded and monotone increasing in $[0,r_0).$

 Alt, Caffarelli and Friedman used this result for proving the Lipschitz continuity
 of critical points of a functional like the following one
 \begin{equation}\label{energy}
 \mathcal{E}(v):=\int_{\Omega}\left(|\nabla v|^2+\chi_{\{v>0\}}\right)dx
\end{equation}
 defined on a set  $K\subset H^1(\Omega),$ where $\Omega\subset \mathbb{R}^n$ is a given bounded open set and $K$ is determined by some known conditions on $v$ given on $\partial \Omega,$ where $\chi_{\{v>0\}}$ denotes, as usual, the characteristic function of the set $\{v>0\}.$
 
The critical points of the previous functional $\mathcal{E}$ satisfy the following two-phase free boundary problem
\begin{equation}\label{two_phase_classical}
\begin{cases}
\Delta u=0& \mbox{in }\Omega^+(u):= \{x\in \Omega:\hspace{0.1cm} u(x)>0\},\\
\Delta u=0& \mbox{in }\Omega^-(u):=\mbox{Int}(\{x\in \Omega:\hspace{0.1cm} u(x)\leq 0\}),\\
|\nabla u^+|^2-|\nabla u^-|^2=1&\mbox{on }\mathcal{F}(u):=\partial \Omega^+(u)\cap \Omega,
\end{cases}
\end{equation}
see \cite{ACF}.
Thus, solutions of \eqref{two_phase_classical} satisfy, at least in a "weak" sense,
the following property: for every $P\in\mathcal{F}(u)$  
\[
(u_\nu^+(P))^2(u_\nu^-(P))^2=\lim_{r\to0^+}\Phi(r)\leq C,
\]
where $u^+:=\sup\{u,0\},$ $u^-:=\sup\{-u,0\},$  $\nu$ is the unit vector, pointing inside $\Omega^+(u)$ 
 at $P\in \mathcal{F}(u) $ and inside $\Omega^-(u)$ 
 at $P\in \mathcal{F}(u)$ when this makes sense in the smooth case. See \cite{CS} for a more general viscosity meaning.

Hence, if one of the two phases, let say $u^-,$ is sufficiently regular at $P\in \mathcal{F}(u),$ see \cite{GT}, then by Hopf maximum principle it results $u^-_\nu(P)>0$ so that, as a by-product, $u^+_\nu(P)$ has to be bounded. In this way, the solutions of the free boundary problem are globally Lipschitz.

After \cite{ACF} many other important papers on this topic appeared. We remind some of them, without pretending of citing all the literature about this topic. In \cite{C3} it was proved that monotonicity formula holds for linear uniformly elliptic operators in divergence form with H\"older continuous coefficients, in \cite{CJK} a formula for non-homogeneous free boundary problems was discovered,  in \cite{TZ} the Riemannian case was treated, while in \cite{MP}  the non-divergence form case has been faced. Some very partial results have been obtained also in the nonlinear case in lower dimension: see \cite{DiKar} for the $p-$Laplace case.

Moreover, this formula became popular and popular for other applications as well. Among them, 
there are further two-phase problems, see \cite{C1} for the elliptic homogeneous case, \cite{ACS1} and \cite{FS_p} for the parabolic homogeneous setting, and \cite{DFS_APDE} for the elliptic linear non-homogeneous problems. In addition we also recall some segregation problems, see for instance: \cite{NTV}, \cite{Q}, \cite{TVZ} and \cite{TTV}. In this way,  during the last decade, the Alt-Caffarelli-Friedman monotonicity formula has quickly increased its importance in literature. 

The existence of such a tool for elliptic degenerate operators, for instance sublaplacians on groups, as far as we know, has not yet been understood. Anyhow, concerning other similar formulas about sublaplacians we find in literature some important contributions, see \cite{Garofalo_Lanconelli} and in particular \cite{Garofalo_Rotz}, where the authors deal with the frequency function of Almgren in Carnot groups. Moreover, see \cite{DGS} and \cite{DGP_ob} for further papers in non-commutative setting dealing with other free boundary problems, namely the obstacle problem. 

We also gently warn the reader about  the existence of results about two-phase problems in the Heisenberg group, like  \cite{FeVa} and \cite{Fe} in particular, where the following  parallel version of the Euler equations (\ref{two_phase_classical}), of a two-phase problem in this non-commutative framework, has been achieved:
\begin{equation}\label{two_phase_Heisenberg}
\begin{cases}
\Delta_{\mathbb{H}^n} u=0& \mbox{in }\Omega^+(u):= \{x\in \Omega:\hspace{0.1cm} u(x)>0\}\\
\Delta_{\mathbb{H}^n} u=0& \mbox{in }\Omega^-(u):=\mbox{Int}(\{x\in \Omega:\hspace{0.1cm} u(x)\leq 0\})\\
|\nabla_{\mathbb{H}^n} u^+|^2-|\nabla_{\mathbb{H}^n} u^-|^2=1&\mbox{on }\mathcal{F}(u):=\partial \Omega^+(u)\cap \Omega.
\end{cases}
\end{equation}
 In the Section \ref{secondsection} of this paper, we shall introduce the main notation that we need for working on this subject.
 Nevertheless, we ask to the reader that is not customary with this language to continue to follow this colloquial presentation having in mind that, in the Heisenberg group, there exists a natural translation of the classical Euclidean tools in terms of parallel {\it intrinsic notions} in the non-commutative structure  $\mathbb{H}^n$.
Hence, what we are going to discuss in a while in this introduction, it should be easily interpreted by all.

We remark that, in this particular non-commutative context, the gradient jump $|\nabla u^+|^2-|\nabla u^-|^2=1$ now is governed by the jump of the horizontal gradient $\nabla_{\mathbb{H}^n}$ of the solutions $u$ of (\ref{two_phase_Heisenberg}) in the Heisenberg group $\mathbb{H}^n.$  As a first consequence, in this degenerate case associated with the sublaplacian $\Delta_{\mathbb{H}^n}$, a new geometric problem, that in the Euclidean two-phase problem did not exist, now appears. In fact, since classical smooth free boundaries of (\ref{two_phase_Heisenberg}), in principle, might have {\it characteristic points}, then the jump of the horizontal gradient of $u$ on $\mathcal{F}(u)$ could be no satisfied pointwise, because the horizontal gradient vanishes on characteristic points, see Section \ref{secondsection}.
 
It is also worthwhile to recall that it has been already proved that a minimum $u$ of  the functional 
 \[
 \mathcal{E}_{\mathbb{H}^n}(v):=\int_{\Omega}\left(|\nabla_{\mathbb{H}^n} v|^2+\chi_{\{v>0\}}\right)dx,
\]
$\Omega \subset\mathbb{H}^n,$ see  Section 3 in \cite{FeVa},
 is endowed by a locally bounded horizontal gradient $\nabla_{\mathbb{H}^n} u$ and moreover that every minimum $u$ satisfies $\Delta_{\mathbb{H}^n} u=0\:\: \mbox{in }\Omega^+(u),$ as well as $\Delta_{\mathbb{H}^n} u=0\:\: \mbox{in }\Omega^-(u),$ even if no word has been spent about the behavior of the free boundary of these local minima. Indeed,  an alternatively way of proving that a local minimum of the functional $\mathcal{E}_{\mathbb{H}^n}$ is intrinsically Lipschitz, instead of using the monotonicity formula, has been shown in \cite{FeVa}.   


The proof of the monotonicity formula in the Euclidean framework is quite long and based on many highly non-trivial results. Thus, we like to revisit it in Section \ref{Euclidean_setting}, by commenting the key points of the proof and then focusing our attention to the parallel steps that we would need to prove in the Heisenberg group, including the statement of our following main result proved in \cite{FeFo} as well.

In order to reach our goal, let us introduce the following family of functionals depending on a real number $\beta>0:$  
\begin{equation}\label{monofondformula1}
J_{\beta,\mathbb{H}^1}(r)= r^{-\beta}\int_{B_r^{{\mathbb{H}^1}}(0)}\frac{\mid\nabla_{\mathbb{H}^1} u_1\mid^2}{|\zeta |_{\mathbb{H}^1}^{2}}d\zeta\int_{B_r^{{\mathbb{H}^1}}(0)}\frac{\mid\nabla_{\mathbb{H}^1} u_2\mid^2}{|\zeta|_{\mathbb{H}^1}^{2}}d\zeta.
\end{equation}

Following the main steps of the Euclidean proof, in \cite{FeFo} we proved the following result as a corollary of an estimation of the first eigenvalue of an operator defined on the boundary of the Koranyi ball of radius one. In fact, as people who work with Heisenberg group stuff well know, this set takes the place of the boundary of the classical Euclidean ball of radius one, when we need to work with the fundamental solution of the sublaplacian $\Delta_{\mathbb{H}^1},$ see \cite{Folland}.  

\begin{teo}\label{corcor1} If there exists a positive number $\beta$ for which $J_{\beta,\mathbb{H}^1}$
 is monotone 
 for every $u_1, u_2\in H_{\mathbb{H}^1}^1(B_1^{{\mathbb{H}^1}}(0))$, such that $\Delta_{\mathbb{H}^1}u_i\geq 0,$ $u_i(0)=0,$ $i=1,2$ and $u_1u_2=0,$   then $\beta\leq 4.$
\end{teo}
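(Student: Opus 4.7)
The plan is to test the hypothesis on an explicit, Heisenberg-homogeneous pair of competitors and read off the bound from a pure scaling identity. Specifically, I would take the antisymmetric pair $u_1(x,y,t) := x^+$ and $u_2(x,y,t) := x^-$, where $x^\pm := \max\{\pm x, 0\}$ and $(x,y,t)$ are exponential coordinates on $\mathbb{H}^1$. Admissibility is straightforward: both functions are globally Lipschitz, hence they lie in $H^1_{\mathbb{H}^1}(B_1^{\mathbb{H}^1}(0))$; they vanish at the origin and have disjoint supports, so $u_1 u_2 \equiv 0$; and since the coordinate $x$ is $\Delta_{\mathbb{H}^1}$-harmonic (as $X_1 x = 1$ and $X_2 x = 0$ with $X_1,X_2$ the standard horizontal fields), the distributional Heisenberg Laplacian of $x^\pm$ is a non-negative measure supported on $\{x=0\}$, i.e.\ $\Delta_{\mathbb{H}^1} u_i \geq 0$.

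Next I would execute the scaling step. The functions $x^\pm$ are homogeneous of degree $1$ with respect to the anisotropic dilations $\delta_\lambda(x,y,t) = (\lambda x, \lambda y, \lambda^2 t)$, the half-spaces $\{\pm x>0\}$ are $\delta_\lambda$-invariant, and $|\nabla_{\mathbb{H}^1} u_i|^2 = \chi_{\{\pm x>0\}}$. The change of variable $\zeta = \delta_r \eta$, combined with $|\delta_r \eta|_{\mathbb{H}^1} = r|\eta|_{\mathbb{H}^1}$ and $d(\delta_r \eta) = r^Q\,d\eta$ for the homogeneous dimension $Q=4$, would yield
\[
\int_{B_r^{\mathbb{H}^1}(0)} \frac{|\nabla_{\mathbb{H}^1} u_i|^2}{|\zeta|_{\mathbb{H}^1}^{2}}\,d\zeta \;=\; r^{\,Q-2}\, C_0 \;=\; r^{2}\, C_0,
\]
where $C_0 := \int_{B_1^{\mathbb{H}^1}(0) \cap \{x>0\}} |\eta|_{\mathbb{H}^1}^{-2}\,d\eta$ is finite and strictly positive: positivity is clear, and finiteness follows because in Heisenberg polar coordinates the singularity $|\eta|^{-2}$ is integrated against a measure of order $\rho^{Q-1}\,d\rho = \rho^3\,d\rho$, yielding the locally integrable density $\rho$. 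Substituting into the definition of $J_{\beta,\mathbb{H}^1}$, I obtain
\[
J_{\beta,\mathbb{H}^1}(r) \;=\; C_0^{2}\, r^{\,4-\beta}.
\]

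Monotonicity (in the non-decreasing sense) of the pure power $r \mapsto C_0^{2}\, r^{4-\beta}$ on any interval $(0,r_0)$ is then equivalent to $4-\beta \geq 0$, i.e.\ $\beta \leq 4$, which would conclude the proof. I do not foresee any serious obstacle: the argument is essentially bookkeeping of Heisenberg-homogeneous degrees, and the only small points to verify carefully are the distributional subharmonicity of $x^\pm$ for $\Delta_{\mathbb{H}^1}$ and the integrability of $|\eta|_{\mathbb{H}^1}^{-2}$ near the origin. The genuinely delicate ingredient alluded to by the authors — the sharp first-eigenvalue estimate on $\partial B_1^{\mathbb{H}^1}(0)$ — is not needed for this upper bound; it would be relevant for the converse question of whether one can actually achieve monotonicity at or below the threshold $\beta=4$.
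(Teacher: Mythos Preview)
Your proposal is correct and uses the same test pair $u_1=x^+$, $u_2=x^-$ as the paper (which works with the slightly more general $(ax+by)^\pm$), but your execution is more direct. The paper proceeds via its Lemma~\ref{lemmaapplicant}, which expresses $J_{\beta,\mathbb{H}^1}'(1)/J_{\beta,\mathbb{H}^1}(1)$ as a sum of two boundary-to-bulk ratios minus $\beta$, and then invokes Lemma~\ref{lemma-integral-quotient-gradient-x-+} to show that each ratio equals exactly $2$ for $(ax+by)^\pm$, forcing $\beta\le 4$. You instead bypass both lemmas by computing $J_{\beta,\mathbb{H}^1}(r)$ for all $r$ in one stroke: since $|\nabla_{\mathbb{H}^1}x^\pm|^2=\chi_{\{\pm x>0\}}$ is $\delta_\lambda$-invariant, the change of variables $\zeta=\delta_r\eta$ gives each factor as $r^{Q-2}C_0=r^2C_0$, hence $J_{\beta,\mathbb{H}^1}(r)=C_0^2\,r^{4-\beta}$. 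This is more elementary and transparent, and avoids setting up the coarea/perimeter machinery on $\partial B_1^{\mathbb{H}^1}(0)$; the paper's route, on the other hand, is consistent with the general framework it develops for the converse direction (the lower bound of Theorem~\ref{lowerbound_f}). Your remark that the eigenvalue estimate is irrelevant for the upper bound $\beta\le 4$ is exactly right.
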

We stated this result in the first Heisenberg group only, because we did not prove a monotonicity formula for all the Heisenberg groups, but simply we have proved that if this formula holds in the non-commutative framework given by $\mathbb{H}^1$, then the right exponent $\beta$ has to be smaller or equal than $4.$ The proof in higher Heisenberg groups requires more computations, but it may be obtained with some further efforts, that we do not discuss here, following the same ideas. On the other hand,  the breakthrough that we would need for concluding that, at least in $\mathbb{H}^1,$ the sharp exponent $\beta$ is exactly $4$,  depends on a long standing open question.
 In fact the best profile of the set that realizes the equality in the isoperimetric inequality in the Heisenberg group (and as a byproduct the descendant Polya-Sz\"ego inequality on the surface of the Koranyi ball of radius one) is still open, see \cite{CDPT} for an introduction to this problem. So that, considering  previous arguments, we have decided to state our result only in $\mathbb{H}^1.$  We shall discuss this part in Section \ref{lackofisoperimetric}. In the remaining Section \ref{basictoolsinHeisenberg}, we describe the main tools we need for obtaining the key estimate on the Rayleigh quotient in $\mathbb{H}^1$, see \cite{FeFo} for the details.

\section{The Euclidean setting}\label{Euclidean_setting}
In this section, following the original paper \cite{ACF}, and \cite{CS}, we try to focus on the main steps we need to achieve for proving the Alt-Caffarelli-Friedman monotonicity formula in the Euclidean setting.

 After a straightforward differentiation, it results 
\begin{equation}\label{derivative-Phi}
\Phi'(r)=I_{1}(r)I_{2}(r)r^{-5}\left(-4+r\left(\frac{I_{1}'}{I_{1}}+\frac{I_{2}'}{I_{2}}\right)\right),
\end{equation}
where for $i=1,2:$
\[
I_i(r)=\int_{B_r(0)}\frac{|\nabla u_i(x)|^2}{|x|^{n-2}}dx.
\]
By a rescaling argument the problem may be reduced to 
\begin{equation}\label{derivative-Phi-rescaled}
\Phi'(r)=I_{1}(r)I_{2}(r)r^{-5}\left(-4+\frac{\displaystyle\int_{\partial B_1(0)}|\nabla u_1(x)|^2d\sigma}{\displaystyle\int_{ B_1(0)}\frac{|\nabla u_1(x)|^2}{|x|^{n-2}}dx}+\frac{\displaystyle\int_{\partial B_1(0)}|\nabla u_2(x)|^2d\sigma}{\displaystyle\int_{ B_1(0)}\frac{|\nabla u_2(x)|^2}{|x|^{n-2}}dx}\right).
\end{equation}
Precisely, we have
\[I_i(r)=\int_{B_r(0)}\frac{|\nabla u_i(x)|^2}{|x|^{n-2}}dx=\int_{B_1(0)}\frac{|\nabla u_i(ry)|^2}{|ry|^{n-2}}\hspace{0.05cm}r^n\hspace{0.05cm}dy=r^2\int_{B_1(0)}\frac{|\nabla u_i(ry)|^2}{|y|^{n-2}}\hspace{0.05cm}dy,\]
and
\begin{align*}
I_i(r)=\int_{B_r(0)}\frac{|\nabla u_i(x)|^2}{|x|^{n-2}}dx&=\int_0^r\left(\int_{\partial B_{\rho}(0)}\frac{|\nabla u_i(x)|^2}{|x|^{n-2}}\hspace{0.05cm}d\sigma(x)\right)d\rho=\int_0^r\left(\int_{\partial B_{1}(0)}\frac{|\nabla u_i(\rho y)|^2}{\rho^{n-2}}\hspace{0.05cm}\rho^{n-1}d\sigma(y)\right)d\rho\\
&=\int_0^r\rho \left(\int_{\partial B_{1}(0)}|\nabla u_i(\rho y)|^2\hspace{0.05cm}d\sigma(y)\right)d\rho,
\end{align*}
where here $y$ denotes the coordinates on $\partial B_1(0).$ Thus, we get
\begin{align*}
\frac{I_i'}{I_i}&=\frac{\displaystyle\frac{d}{d r}\int_0^r\rho \left(\int_{\partial B_{1}(0)}|\nabla u_i(\rho y)|^2\hspace{0.05cm}d\sigma(y)\right)d\rho}{\displaystyle r^2\int_{B_1(0)}\frac{|\nabla u_i(ry)|^2}{|y|^{n-2}}\hspace{0.05cm}dy}=\frac{r\displaystyle \int_{\partial B_{1}(0)}|\nabla u_i(r y)|^2\hspace{0.05cm}d\sigma(y)}{\displaystyle r^2\int_{B_1(0)}\frac{|\nabla u_i(ry)|^2}{|y|^{n-2}}\hspace{0.05cm}dy}\\
&=\frac{1}{r}\frac{\displaystyle \int_{\partial B_{1}(0)}|\nabla u_i(r y)|^2\hspace{0.05cm}d\sigma(y)}{\displaystyle \int_{B_1(0)}\frac{|\nabla u_i(ry)|^2}{|y|^{n-2}}\hspace{0.05cm}dy},
\end{align*}
which implies, if we define 
\[(u_i)_r(x)=\frac{u_i(rx)}{r},\quad x\in B_1,\]
that
\[\frac{I_i'}{I_i}=\frac{1}{r}\frac{\displaystyle \int_{\partial B_{1}(0)}|\nabla (u_i)_r( y)|^2\hspace{0.05cm}d\sigma(y)}{\displaystyle \int_{B_1(0)}\frac{|\nabla (u_i)_r(y)|^2}{|y|^{n-2}}\hspace{0.05cm}dy},\]
where $(u_i)_r$ is defined in $B_1(0).$ As a consequence, if we write $y=x$ and $(u_i)_r=u_i$ the last equality gives
\[r\frac{I_i'}{I_i}=\frac{\displaystyle \int_{\partial B_{1}(0)}|\nabla u_i( y)|^2\hspace{0.05cm}d\sigma(y)}{\displaystyle \int_{B_1(0)}\frac{|\nabla u_i(y)|^2}{|y|^{n-2}}\hspace{0.05cm}dy},\]
and so \eqref{derivative-Phi} becomes \eqref{derivative-Phi-rescaled}.\newline
Now, if 
$$
-4+\frac{\displaystyle\int_{\partial B_1(0)}|\nabla u_1(x)|^2d\sigma}{\displaystyle\int_{ B_1(0)}\frac{|\nabla u_1(x)|^2}{|x|^{n-2}}dx}+\frac{\displaystyle\int_{\partial B_1(0)}|\nabla u_2(x)|^2d\sigma}{\displaystyle\int_{ B_1(0)}\frac{|\nabla u_2(x)|^2}{|x|^{n-2}}dx}\geq 0
$$
then, from \eqref{derivative-Phi-rescaled}, $\Phi'(r)\geq 0.$
Hence, in order to prove that previous inequality holds, the following ratios
$$
J_i(r):=\frac{\displaystyle\int_{\partial B_1(0)}|\nabla u_i(x)|^2d\sigma}{\displaystyle\int_{ B_1(0)}\frac{|\nabla u_i(x)|^2}{|x|^{n-2}}dx},
$$
for $i=1,2,$ have to be estimated. 

Since the gradient may split in two orthogonal parts involving the radial part and the tangential part, respectively denoted by $\nabla^\rho u_i$ and $\nabla^\theta u_i,$ it results
$$
|\nabla u_i(x)|^2=|\nabla^\rho u_i(x)|^2+|\nabla^\theta u_i(x)|^2.
$$
Then, we can rewrite $J_i$ as
\begin{equation}\label{rewriting-of-J}
J_i(r)=\frac{\displaystyle\int_{\partial B_1(0)}\left(|\nabla^{\rho} u_i(x)|^2+|\nabla^{\theta} u_i(x)|^2\right)\hspace{0.05cm}d\sigma}{\displaystyle\int_{ B_1(0)}\frac{|\nabla u_i(x)|^2}{|x|^{n-2}}dx}.
\end{equation}
At this point, we estimate the numerator and denominator of \eqref{rewriting-of-J} separately.\newline
As regards the numerator, we define first
\[\lambda(\Gamma_i):=\inf_{v\hspace{0.025cm}\in\hspace{0.025cm} H_0^1(\Gamma_i)}\frac{\displaystyle\int_{\Gamma_i}|\nabla^\theta v(x)|^2d\sigma}{\displaystyle\int_{\Gamma_i}v(x)^2d\sigma},\]
where 
\[\Gamma_i:=\{x\in \partial B_1(0):\hspace{0.1cm} u_i(x)>0\}\]
and $\lambda(\Gamma_i),$ $i=1,2,$ is the Rayleigh quotient. By the definition of $\lambda(\Gamma_i),$ we thus obtain, for every $\beta_i\in (0,1),$
\begin{align*}
&\int_{\partial B_{1}(0)}\left|\nabla^{\theta}u_i(x)\right|^2 d\sigma=\int_{\Gamma_i}\left|\nabla^{\theta}u_i(x)\right|^2 d\sigma\geq \lambda(\Gamma_i)\int_{\Gamma_i}u_i(x)^2d\sigma\\
&=(1-\beta_i+\beta_i)\lambda(\Gamma_i)\int_{\Gamma_i}u_i(x)^2d\sigma=\beta_i\lambda(\Gamma_i)\int_{\Gamma_i}u_i(x)^2d\sigma+(1-\beta_i)\lambda(\Gamma_i)\int_{\Gamma_i}u_i(x)^2d\sigma,
\end{align*}
hence, by Cauchy inequality, we have
\begin{equation}\label{lower-bound-numerator}
\begin{split}
&\int_{\partial B_{1}(0)}\left(|\nabla^{\rho}u_i(x)|^2+|\nabla^{\theta}u_i(x)|^2\right)\hspace{0.05cm}d\sigma\geq \int_{ \Gamma_i}|\nabla^{\rho}u_i(x)|^2d\sigma+\beta_i\lambda(\Gamma_i)\int_{\Gamma_i}u_i(x)^2d\sigma\\
&+(1-\beta_i)\lambda(\Gamma_i)\int_{\Gamma_i}u_i(x)^2d\sigma\geq 2 \left(\int_{ \Gamma_i}|\nabla^{\rho}u_i(x)|^2d\sigma\right)^{1/2}\left(\beta_i\lambda(\Gamma_i)\int_{\Gamma_i}u_i(x)^2d\sigma\right)^{1/2}\\
&+(1-\beta_i)\lambda(\Gamma_i)\int_{\Gamma_i}u_i(x)^2d\sigma.
\end{split}
\end{equation}
Concerning the denominator, instead, we compute
\[\Delta(u_i^2)=\sum_{j=1}^n\frac{\partial^2}{\partial x_j^2}\left(u_i^2\right)=\sum_{j=1}^n\frac{\partial}{\partial x_j}\left(2u_i\frac{\partial u_i}{\partial x_j}\right)=2\left(\left|\nabla u_i\right|^2+u_i\Delta u_i\right)\geq 2\left|\nabla u_i\right|^2,\]
since $u_i\Delta u_i\geq 0$ by the assumptions on $u_i.$ 

Consequently, we achieve 
the following estimate: 
\begin{equation}\label{upper-bound-denominator}
\int_{ B_1(0)}\frac{|\nabla u_i(x)|^2}{|x|^{n-2}}dx\leq \left(\int_{ \Gamma_i} \left|\nabla^\rho u_i(x)\right|^2d\sigma\right)^{\frac{1}{2}}\left(\int_{ \Gamma_i}u_i^2(x)\hspace{0.05cm}d\sigma\right)^{\frac{1}{2}}+\frac{n-2}{2}\int_{\Gamma_i }u_i^2(x)\hspace{0.05cm}d\sigma. 
\end{equation}

In fact, previous inequality follows after an integration by parts, using the facts that $\left|x\right|^{2-n}$ is, up to a multiplicative constant, the fundamental solution of $\Delta$ and $0\in \mathcal{F}(u_i),$ $i=1,2,$ and by H\"older inequality because:
\begin{align*}
&\int_{ B_1(0)}\frac{|\nabla u_i(x)|^2}{|x|^{n-2}}dx\leq \frac{1}{2}\int_{ B_1(0)}\Delta\left(u_i^2\right)(x)|x|^{2-n}dx=\frac{1}{2}\int_{ B_1(0)}\div\left(\left|x\right|^{2-n}\nabla (u_i^2)(x)\right)dx\\
&-\int_{ B_1(0)}\nabla (\left|x\right|^{2-n})\cdot\nabla(u_i^2)(x)\hspace{0.05cm}dx=\frac{1}{2}\bigg(\int_{\partial B_{1}(0)}\left|x\right|^{2-n}\nabla (u_i^2)(x)\cdot \frac{x}{\left|x\right|}d\sigma-\int_{ B_1(0)}\div\left(u_i^2(x)\nabla (\left|x\right|^{2-n})\right)dx\\
&+\int_{ B_1(0)}u_i^2(x)\Delta(\left|x\right|^{2-n})dx\bigg)
=\frac{1}{2}\left(\int_{\Gamma_i}2u_i(x)\nabla^{\rho} u_i(x)\hspace{0.05cm}d\sigma+(n-2)\int_{\Gamma_i }u_i^2(x)\left|x\right|^{1-n}d\sigma\right)\\
&=\int_{\Gamma_i}u_i(x)\nabla^{\rho} u_i(x)\hspace{0.05cm}d\sigma+\frac{n-2}{2}\int_{\Gamma_i }u_i^2(x)d\sigma\leq \left(\int_{ \Gamma_i} \left|\nabla^\rho u_i(x)\right|^2d\sigma\right)^{\frac{1}{2}}\left(\int_{ \Gamma_i}u_i^2(x)\hspace{0.05cm}d\sigma\right)^{\frac{1}{2}}+\frac{n-2}{2}\int_{\Gamma_i }u_i^2(x)\hspace{0.05cm}d\sigma.
\end{align*}
Now, putting together \eqref{lower-bound-numerator} and \eqref{upper-bound-denominator}, we get, in view of \eqref{rewriting-of-J},
\begin{equation}\label{lower-bound-J-i}
\begin{split}
&
J_i(r)\geq \frac{2\left(\displaystyle\int_{\Gamma_i}|\nabla^\rho u_i(x)|^2d\sigma\right)^{\frac{1}{2}}\left(\displaystyle\int_{\Gamma_i}\beta_i \lambda(\Gamma_i)  u_i^2(x)\hspace{0.05cm}d\sigma\right)^{\frac{1}{2}}+(1-\beta_i)\lambda(\Gamma_i)\displaystyle\int_{\Gamma_i}u_i^2(x)\hspace{0.05cm}d\sigma}{\left(\displaystyle\int_{ \Gamma_i} \left|\nabla^\rho u_i(x)\right|^2d\sigma\right)^{\frac{1}{2}}\left(\displaystyle\int_{ \Gamma_i}u_i^2(x)\hspace{0.05cm}d\sigma\right)^{\frac{1}{2}}+\displaystyle\frac{n-2}{2}\displaystyle\int_{ \Gamma_i}u_i^2(x)\hspace{0.05cm}d\sigma},
\end{split}
\end{equation}
and setting $\xi_i=\left(\displaystyle\int_{ \Gamma_i} \left|\nabla^\rho u_i(x)\right|^2d\sigma\right)^{\frac{1}{2}}$ and $\eta_i=\left(\displaystyle\int_{\Gamma_i}  u_i^2(x)\hspace{0.05cm}d\sigma\right)^{\frac{1}{2}},$ it holds
\begin{equation*}
\begin{split}
&J_i(r)\geq\frac{2(\beta_i\lambda(\Gamma_i))^{\frac{1}{2}}\xi_i\eta_i+(1-\beta_i)\lambda (\Gamma_i)\eta_i^2}{\xi_i\eta_i+\frac{n-2}{2} \eta_i^2}= \frac{2(\beta_i\lambda(\Gamma_i))^{\frac{1}{2}}+(1-\beta_i)\lambda (\Gamma_i)\frac{\eta_i}{\xi_i}}{1+\frac{n-2}{2} \frac{\eta_i}{\xi_i}}\\
&\geq \inf_{z\geq 0} \frac{2(\beta_i\lambda(\Gamma_i))^{\frac{1}{2}}+(1-\beta_i)\lambda (\Gamma_i)z}{1+\frac{n-2}{2} z}=2\min\left\{\frac{\lambda (\Gamma_i)}{n-2}(1-\beta_i),(\beta_i\lambda(\Gamma_i))^{\frac{1}{2}}\right\}.
\end{split}
\end{equation*}
The last equality easily follows by elementary arguments.

Now, if it were possible to choose $\beta_i\in (0,1)$ in such a way that 
$$
\frac{\lambda (\Gamma_i)}{n-2}(1-\beta_i)=(\beta_i\lambda(\Gamma_i))^{\frac{1}{2}}
$$
we would realize, by denoting  $\alpha_i:=(\beta_i\lambda(\Gamma_i))^{\frac{1}{2}},$ that previous equation is satisfied if and only if
$$
 \alpha_i^2+(n-2)\alpha_i-\lambda(\Gamma_i)=0.
$$
On the other hand, since a function $u=\rho^{\alpha}g(\theta),$ $\theta:=(\theta_1,\dots,\theta_{n-1}),$ is harmonic in a cone determined by a domain $\Gamma$ whenever
$$
\rho^{\alpha-2}\left(( \alpha(\alpha-1)+\alpha (n-1))g(\theta)+\Delta_{\theta} g\right)=0,
$$
we deduce that there exists $\alpha_i$ such that
$$
\alpha_i(\alpha_i-1)+\alpha_i (n-1)=\lambda(\Gamma_i),
$$
namely
\[\alpha_i^2+(n-2)\alpha_i-\lambda(\Gamma_i)=0.\]
By the structure of the equation, it immediately comes out that there always exists a strictly positive solution $\alpha_i=\alpha_i(\Gamma_i),$ which is called the characteristic constant of $\Gamma_i.$ 

Therefore, we have to prove the existence of $\beta_i\in (0,1)$ such that
\begin{equation}\label{betaf}
\frac{-(n-2)+\sqrt{(n-2)^2+4\lambda(\Gamma_i)}}{2}=(\beta_i\lambda(\Gamma_i))^{\frac{1}{2}}.
\end{equation}
Specifically, \eqref{betaf} is equivalent to solve
$$
\frac{4\lambda(\Gamma_i)}{(n-2)+\sqrt{(n-2)^2+4\lambda(\Gamma_i)}}=2(\beta_i\lambda(\Gamma_i))^{\frac{1}{2}},
$$
that is
$$
\frac{2\lambda(\Gamma_i)^{\frac{1}{2}}}{(n-2)+\sqrt{(n-2)^2+4\lambda(\Gamma_i)}}=\beta_i^{\frac{1}{2}}.
$$
Since the continuous positive function defined in $[0,+\infty)$ as 
$$
z\to\frac{z}{(n-2)+\sqrt{(n-2)^2+z^2}}
$$
is strictly increasing, $\left(\frac{z}{(n-2)+\sqrt{(n-2)^2+z^2}}\right)(0)=0$ and $\sup\limits_{[0,+\infty)} \frac{z}{(n-2)+\sqrt{(n-2)^2+z^2}}=1,$ we conclude that for every $\lambda(\Gamma_i)>0,$ there exists $\beta_i$ such that
(\ref{betaf}) holds. In particular, we get
$$
\beta_i=\left(\frac{2\lambda(\Gamma_i)^{\frac{1}{2}}}{(n-2)+\sqrt{(n-2)^2+4\lambda(\Gamma_i)}}\right)^2.
$$
Hence, with previous choice of $\beta_i,$ if we denote
$$
\alpha_i:=\min\left\{\frac{\lambda (\Gamma_i)}{n-2}(1-\beta_i),(\beta_i\lambda(\Gamma_i))^{\frac{1}{2}}\right\},
$$
which is also the exponent corresponding to the eigenvalue given by the Rayleigh quotient $\lambda (\Gamma_i),$ we conclude that, whenever
\begin{equation}\label{belowestimate}
\alpha_1+\alpha_2\geq 2,
\end{equation}
then $\Phi'\geq 0.$

So, for completing this proof, we would need to know that (\ref{belowestimate}) holds.

To this end, by \cite{Sperner} we know that $\alpha_i(\Gamma_i)\geq \alpha_i(\Gamma_i^*),$ where $\Gamma_i^*\subset \partial B_1(0)$ is a spherical cap, namely a set of the form
\[\Gamma_i^*=\partial B_1(0)\cap \left\{x_n>s\right\},\quad -1<s<1,\] such that $\mathcal{H}^{n-1}(\Gamma_i)=\mathcal{H}^{n-1}(\Gamma_i^*).$ Here  $\mathcal{H}^{n-1}$ denotes the $(n-1)$-dimensional Hausdorff measure on $\partial B_1(0).$\newline
Precisely, \cite{Sperner} shows that if $u\in C^{\infty}(\partial B_1(0),\R),$ then 
\begin{equation}\label{result-theorem-sperner}
\begin{cases}
\displaystyle\int\limits_{\partial B_1(0)}\left\|\nabla u^*\right\|^pd\mathcal{H}^{n-1}\leq\displaystyle\int\limits_{\partial B_1(0)}\left\|\nabla u\right\|^pd\mathcal{H}^{n-1}& 1\leq p<\infty,\\
\left\|\nabla u^*\right\|_{L^{\infty}(\partial B_1(0))}\leq \left\|\nabla u\right\|_{L^{\infty}(\partial B_1(0))},
\end{cases}
\end{equation}
where $u^*$ is the symmetrized function of $u,$ depending only on the latitude of the argument. Moreover, we also have that $u_{\#}(\mathcal{H}^{n-1})\restrict{\mathcal{B}(\R)}=u^*_{\#}(\mathcal{H}^{n-1})\restrict{\mathcal{B}(\R)},$ that is the pushforward measures of $u$ and $u^*$ coincide in the Borel sets of $\R,$ which entails 
\begin{equation}\label{equality-integral-phi-u-phi-u-*}
\int\limits_{\partial B_1(0)}\phi\circ u\hspace{0.1cm}d\mathcal{H}^{n-1}=\int\limits_{\partial B_1(0)}\phi\circ u^*\hspace{0.1cm}d\mathcal{H}^{n-1},
\end{equation}
for any function $\phi:\R\to \R$ $\mu^*$-measurable, where $\mu^*$ is the outer measure defined on the power set $\mathbb{P}(\R)$ of $\R$ as 
\[\mu^*(F)=\inf\left\{\sum_{i=1}^{\infty}\mu(A_i):A_i\in \mathcal{B}(\R),F\subset \bigcup_{i=1}^{\infty}A_i\right\},\]
with $\mu=u_{\#}(\mathcal{H}^{n-1})\restrict{\mathcal{B}(\R)}=u^*_{\#}(\mathcal{H}^{n-1})\restrict{\mathcal{B}(\R)}$ and $F\in \mathbb{P}(\R).$ Hence, choosing $\phi=x^2$ in \eqref{equality-integral-phi-u-phi-u-*}, we obtain
\[\int\limits_{\partial B_1(0)} u^2\hspace{0.1cm}d\mathcal{H}^{n-1}=\int\limits_{\partial B_1(0)}(u^*)^2\hspace{0.1cm}d\mathcal{H}^{n-1},\]
which gives, together with \eqref{result-theorem-sperner}, $\lambda(\Gamma_i)\geq \lambda(\Gamma_i^*),$ and thus, using the expression of $\alpha_i(\Gamma_i),$ $\alpha_i(\Gamma_i)\geq \alpha_i(\Gamma_i^*),$ since $u^*$ is defined on $\Gamma_i^*,$ if $u$ is defined on $\Gamma_i.$ The fact that $\mathcal{H}^{n-1}(\Gamma_i)=\mathcal{H}^{n-1}(\Gamma_i^*)$ derives from a property of $u^*$ which says that
\[\mathcal{H}^{n-1}(u^{-1}[\rho,\infty))=\mathcal{H}^{n-1}((u^*)^{-1}[\rho,\infty)),\quad \forall \rho \in \R.\]
On the other hand, from \cite{FH} we achieve that $\alpha_i(\Gamma_i^*)\geq \psi(s_i),$ where $s_i=\frac{\mathcal{H}^{n-1}(\Gamma_i^*)}{\mathcal{H}^{n-1}(\p B_1(0))}$ and $\psi(s),$ $s\in(0,1),$ is convex and decreasing. In particular, $\psi(s)$ is defined as 
\begin{equation}\label{definition-psi}
\psi(s):=
\begin{cases}
\frac{1}{2}\log\dfrac{1}{4s}+\frac{3}{2},&0<s\leq\dfrac{1}{4},\\
2(1-s),&\dfrac{1}{4}\leq s<1.
\end{cases}
\end{equation}
Precisely, the proof of $\alpha_i(\Gamma_i^*)\geq \psi(s_i)$ is organized in some steps.\newline
First of all, we denote $\alpha(E)=\alpha(s,n),$ where $\alpha (E)$ is the characteristic constant of the spherical cap $E\subset \partial B_1(0),$ $s=\frac{\mathcal{H}^{n-1}(E)}{\mathcal{H}^{n-1}(\partial B_1(0))},$ and $n$ is the dimension. At this point, Theorem $2$ in \cite{FH} tells us that $\alpha(s,n)$ is a monotone decreasing function of $n$ for fixed $s,$ so 
\begin{equation}\label{definition-alpha-s-infinity}
\alpha(s,\infty)=\lim_{n\to \infty}\alpha(s,n)
\end{equation}
is well defined and satisfies $\alpha(s,\infty)\leq \alpha(s,n)$ for every $n.$ It is thus sufficient to show that $\alpha(s,\infty)\geq \psi(s)$ defined in \eqref{definition-psi}. To this end, Theorem F in \cite{FH}, which is taken by \cite{HO}, says that $\alpha(s)\geq \psi(s),$ where
\[s:=\int\limits_h^{\infty}e^{-(1/2)t^2} dt,\]
with $h=h(\alpha)$ the largest real zero of 
\[F(x)=e^{-(1/4)x^2}H_{\alpha}\left(\frac{x}{\sqrt{2}}\right)\]
satisfying
\[\frac{d^2 F}{d x^2}+\left(\alpha+\frac{1}{2}-\frac{1}{4}x^2\right)F=0\]
and
\[\frac{F'(0)}{F(0)}=-2^{1/2}\frac{\Gamma\left(\frac{1-\alpha}{2}\right)}{\Gamma\left(-\frac{\alpha}{2}\right)},\]
where $\Gamma$ is the Euler gamma function. In particular, $H_{\alpha}(x)$ is the Hermite's function of order $\alpha.$\newline
Now, Theorem $3$ in \cite{FH} shows that $\alpha(s,\infty)$ defined in \eqref{definition-alpha-s-infinity} is equal to $\alpha(s)$ of Theorem F, since $s$ of $\alpha(s,n)$ converges to $s$ of $\alpha(s)$ as $n$ goes to $\infty,$ i.e.
\[\frac{\mathcal{H}^{n-1}(E)}{\mathcal{H}^{n-1}(\partial B_1(0))}\stackrel{n\to \infty}{\to}\int\limits_h^{\infty}e^{-(1/2)t^2} dt.\]
Hence, being $\alpha(s,n)\geq \alpha(s,\infty)$ for all $n,$ we finally have that $\alpha(s,n)\geq \psi(s)$ for every $n$ and for all $s\in (0,1).$\newline
As a consequence, recalling that $s_i=\frac{\mathcal{H}^{n-1}(\Gamma_i^*)}{\mathcal{H}^{n-1}(\partial B_1(0))},$ $i\in\left\{1,2\right\},$ $\frac{s_1+s_2}{2}\leq\frac{1}{2},$ because $\Gamma_1^*\cap \Gamma_2^*=\emptyset,$ hence, since $\psi(s)$ defined in \eqref{definition-psi} is convex and decreasing, we get
\[\alpha_1+\alpha_2\geq \psi(s_1)+\psi(s_2)\geq 2\left(\frac{1}{2}\psi(s_1)+\frac{1}{2}\psi(s_2)\right)\geq 2\psi\left(\frac{s_1+s_2}{2}\right)\geq 2\psi\left(\frac{1}{2}\right)=2,\]
which finally gives \eqref{belowestimate}.\newline
An alternative proof of this result is given in \cite{CS}, where, using \cite{BKP} and \cite{BL}, the two authors directly show that $\alpha(s_1)+\alpha(s_2)\geq 2,$ exploiting the properties of $\alpha(s)$ of Theorem F in \cite{FH}, which is the first Dirichlet eigenvalue on $[h,\infty)$ associated to the Hermite operator
\[-\frac{d^2}{d x^2}+\left(\frac{1}{4}x^2-\frac{1}{2}\right).\]

\section{The main notation in the Heisenberg group}\label{secondsection}
We denote by
$\mathbb{H}^n$ the set $\mathbb{R}^{2n+1},$ $n\in \mathbb{N},$ $n\geq 1,$ 
endowed with the non-commutative inner law in such  a way that for every $P\equiv (x_1,y_1,t_1)\in \mathbb{R}^{2n+1},$ $M\equiv (x_2,y_2,t_2)\in \mathbb{R}^{2n+1},$ $x_{i}\in \mathbb{R}^n,$ $y_{i}\in \mathbb{R}^n,$ $i=1,2:$ 
$$
P\circ M:=(x_1+x_2,y_1+y_2,t_1+t_2+2(\langle x_2, y_1\rangle- \langle x_1, y_2\rangle)),
$$
where $\langle \cdot, \cdot\rangle$ denotes the usual inner product in $\mathbb{R}^n.$ Let $X_i=(e_i,0,2y_i)$ and $Y_i=(0,e_i,-2x_i),$ $i=1,\dots,n,$ where $\{e_i\}_{1\leq i\leq n}$ is the canonical basis for $\mathbb{R}^n.$

We use the same symbol to denote the vector fields associated with the previous vectors, so that for $i=1,\dots,n,$
$$
X_i=\partial_{x_i}+2y_i\partial_t,\quad
Y_i=\partial_{y_i}-2x_i\partial_t.
$$
The commutator between the vector fields is
$$
[X_i,Y_i]=-4\partial_t,\quad i=1,\ldots,n,
$$
otherwise is $0.$ 
The intrinsic gradient of a smooth function $u$ in a point $P$ is  
$$
\nabla_{\mathbb{H}^n}u(P)=\sum_{i=1}^n(X_iu(P)X_i(P)+Y_iu(P)Y_i(P)).
$$
Now, there exists a unique metric on 
$H\mathbb{H}^n_P=\mbox{span}\{X_1(P),\dots,X_n(P),Y_1(P),\dots,Y_n(P)\}$ which makes orthonormal the set of vectors $\{X_1,\dots,X_n,Y_1,\dots,Y_n\}.$ Thus, for every $P\in \mathbb{H}^n$ and for every $U,W\in H\mathbb{H}^n_P,$ $U=\sum_{j=1}^n(\alpha_{1,j}X_{j}(P)+\beta_{1,j}Y_j(P)),$
$V=\sum_{j=1}^n(\alpha_{2,j}X_{j}(P)+\beta_{2,j}Y_j(P)),$ we have
$$
\langle U,V\rangle=\sum_{j=1}^n(\alpha_{1,j}\alpha_{2,j}+\beta_{1,j}\beta_{2,j}).
$$  
In particular, we get a norm associated with the metric on the space $\mbox{span}\{X_1,\dots,X_n,Y_1,\dots,Y_n\},$ which is
$$
\mid U\mid=\sqrt{\sum_{j=1}^n\left(\alpha_{1,j}^2+\beta_{1,j}^2\right)}.
$$
For example, the norm of the intrinsic gradient of a smooth function $u$ in $P$ is
$$
\mid \nabla_{\mathbb{H}^n} u(P)\mid=\sqrt{\sum_{i=1}^n\left((X_iu(P))^2+(Y_iu(P))^2\right)}.
$$
Moreover, if $\nabla_{\mathbb{H}^n} u(P)\not=0,$ then
$$
\left|\frac{\nabla_{\mathbb{H}^n}u(P)}{\mid \nabla_{\mathbb{H}^n}u(P)\mid}\right|=1.
$$

 If $\nabla_{\mathbb{H}^n} u(P)=0,$ instead, we say that the point $P$ is characteristic for the smooth surface $\{u=u(P)\}.$
Hence, for every point $M\in \{u=u(P)\},$ which is not characteristic, it is well defined the intrinsic normal to the surface $\{u=u(P)\}$ as follows: 
$$
\nu(M)=\frac{\nabla_{\mathbb{H}^n} u(M)}{\mid \nabla_{\mathbb{H}^n} u(M)\mid}.
$$
At this point, we introduce in the Heisenberg group $\mathbb{H}^n$ the following gauge norm:
$$
| (x,y,t)|_{\mathbb{H}^n}:=\sqrt[4]{(\mid x\mid^2+\mid y\mid^2)^2+t^2}.
$$
In particular, for every positive number $r,$ the gauge ball of radius $r$ centered in $0$ is
$$
B^{\mathbb{H}^n}_r(0):=\{P\in \mathbb{H}^n :\:\:|P|_{\mathbb{H}^n}<r\}.
$$
In the Heisenberg group, a dilation semigroup is defined as follows: for every $r>0$ and for every $P=(x,y,t)\in \mathbb{H}^n,$ let
$$
\delta_r(P):=(rx,ry,r^2t).
$$

Let $P:=(\xi,\eta,\sigma)\in\mathbb{H}^n$  and $O=(0,0,0),$ then we define
 $$d_K(P,O):=|P|_{\mathbb{H}^n}.$$ 

For every $P, T\in \mathbb{H}^n$  is well defined 
$$
d_K(P,T)=|P^{-1}\circ T|_{\mathbb{H}^n},
$$ 
that is a distance $d_K$ on the Heisenberg group $\mathbb{H}^n,$ known as the Koranyi distance. This distance is left invariant, that is for every $P,T, R\in \mathbb{H}^n$
$$
d_K(R\circ P, R\circ T)=d_K(P,T).
$$ 

As a consequence, we may perform our computation supposing of dealing with $d_K(P,O)=|P|_{\mathbb{H}^n},$ where $O=(0,0,0),$ simply by multiplying the left hand side by $T^{-1}.$

 In particular, for every $i=1,\dots,n$ we obtain:
\begin{equation*}\begin{split}
X_i|P|_{\mathbb{H}^n}
=|P|_{\mathbb{H}^n}^{-3}((\mid \xi\mid^2+\mid \eta\mid^2)\xi_i+\sigma\eta_i)
\end{split}
\end{equation*}
and
$$
Y_i|P|_{\mathbb{H}^n}
= |P|_{\mathbb{H}^n}^{-3}( (\mid \xi\mid^2+\mid \eta\mid^2)\eta_i- \sigma\xi_i).
$$
Moreover, for every $i=1,\dots,n:$
$$
X_i^2|P|_{\mathbb{H}^n}=-3|P|_{\mathbb{H}^n}^{-7}((\mid \xi\mid^2+\mid \eta\mid^2)\xi_i+\sigma\eta_i)^2+|P|_{\mathbb{H}^n}^{-3}(2\xi_i^2+(\mid \xi\mid^2+\mid \eta\mid^2)+2\eta_i^2)
$$
and
$$
Y_i^2|P|_{\mathbb{H}^n}=-3|P|_{\mathbb{H}^n}^{-7}((\mid \xi\mid^2+\mid \eta\mid^2)\eta_i-\sigma\xi_i)^2+|P|_{\mathbb{H}^n}^{-3}(2\eta_i^2+(\mid \xi\mid^2+\mid \eta\mid^2)+2\xi_i^2).
$$
As a consequence,
\begin{equation}\begin{split}
&\mid\nabla_{\mathbb{H}^n}|P|_{\mathbb{H}^n}\mid^2=\sum_{i=1}^n\left((X_i|P|_{\mathbb{H}^n})^2+(Y_i|P|_{\mathbb{H}^n})^2\right)
=(\mid \xi\mid^2+\mid \eta\mid^2)|P|_{\mathbb{H}^n}^{-2},
\end{split}
\end{equation}
and
\begin{equation}\begin{split}
\Delta_{\mathbb{H}^n}|P|_{\mathbb{H}^n}
=(2n+1)(\mid \xi\mid^2+\mid \eta\mid^2)|P|_{\mathbb{H}^n}^{-3}.
\end{split}
\end{equation}

Thus, for every $i=1,\dots,n,$ denoting by $Q:=2n+2$ homogeneous dimension we get: 
$$
X_i|P|_{\mathbb{H}^n}^{2-Q}=(2-Q)|P|_{\mathbb{H}^n}^{1-Q}|P|_{\mathbb{H}^n}^{-3}\left((\mid \xi\mid^2+\mid \eta\mid^2)\xi_i+\sigma\eta_i\right),
$$

$$
Y_i|P|_{\mathbb{H}^n}^{2-Q}=(2-Q)|P|_{\mathbb{H}^n}^{1-Q} |P|_{\mathbb{H}^n}^{-3}\left((\mid \xi\mid^2+\mid \eta\mid^2)\eta_i- \sigma\xi_i\right),
$$
and
\begin{equation*}\begin{split}
&\Delta_{\mathbb{H}^n}|P|_{\mathbb{H}^n}^{2-Q}
=(2-Q)|P|_{\mathbb{H}^n}^{-2-Q}(\mid \xi\mid^2+\mid \eta\mid^2)\left(1-Q + 2n+1\right)=0.\end{split}
\end{equation*}

In conclusion, $|P|_{\mathbb{H}^n}^{2-Q}$ is, up to a constant, the fundamental solution of the sublaplacian $\Delta_{\mathbb{H}^n}$  in the Heisenberg group, with the pole in the origin, and
$\Gamma(P,R)=c\left|P^{-1}\circ R\right|_{\mathbb{H}^n}^{2-Q}$ is the fundamental solution of the sublaplacian $\Delta_{\mathbb{H}^n}.$

The definition of $\mathbb{H}^n-$subharmonic function, as well as the one of $\mathbb{H}^n-$superharmonic function in a set $\Omega\subset \mathbb{H}^{n},$ can be stated, as usual, in the classical way, requiring respectively that $\Delta_{\mathbb{H}^n}u(P)\geq 0$ for every $P\in \Omega,$  for the $\mathbb{H}^n-$subharmonicity, and that $\Delta_{\mathbb{H}^n}u(P)\leq 0$ for every $P\in \Omega$ for having $\mathbb{H}^n-$superharmonicity. We refer to \cite{BLU} for further details.  

Concerning the natural Sobolev spaces to consider in the Heisenberg group $\mathbb{H}^n$, we refer to the literature, see for instance \cite{GN}. Here, we simply recall that: 
$$
\mathcal{L}^{1,2}(\Omega):=\{f\in L^{2}(\Omega): X_if,\:\:Y_if\in L^{2}(\Omega),\:\:i=1,\dots, n\}
$$ 
is a Hilbert space with respect to the norm
$$
|f|_{\mathcal{L}^{1,2}(\Omega)}=\left(\int_{\Omega}(\sum_{i}^n(X_if)^2+(Y_if)^2)+|f|^2dx\right)^{\frac{1}{2}}.
$$
Moreover
$$
 H_{\mathbb{H}^n}^1(\Omega)=\overline{C^{\infty}(\Omega)\cap \mathcal{L}^{1,2}(\Omega)}^{|\cdot |_{\mathcal{L}^{1,2}(\Omega)}}.
$$

Now, if $E\subset\mathbb{H}^n$ is a measurable set, a notion of $\mathbb{H}^n$-perimeter measure
$|\partial E|_{\mathbb{H}^n}$ has been introduced in \cite{GN} in a more general setting, even if here we recall some results in the framework of the Heisenberg group, the simplest  non-trivial example of Carnot group. We refer to \cite{GN},
\cite{FSSC_houston}, \cite{FSSC_CAG}, \cite{FSSC_step2} for a detailed presentation. For
our applications, we restrict ourselves to remind that, if $E$ has locally finite $\mathbb{H}^n$-perimeter
(is a $\mathbb{H}^n$-Caccioppoli set),
then $|\partial E|_{\mathbb{H}^n}$ is a Radon measure in $\mathbb{H}^n$, invariant under
group translations and $\mathbb{H}^n$-homogeneous of degree $Q-1$. Moreover, the following
representation theorem holds (see  \cite{capdangar}).

\begin{prop}\label{perimetro regolare}
If $E$ is a $\mathbb{H}^{n}:=\mathbb{R}^{2n+1}$-Caccioppoli set with Euclidean ${\mathbf C}^1$
boundary, then there is an explicit representation of the
$\mathbb{H}^{n}$-perimeter in terms of the Euclidean $2n$-dimensional
Hausdorff measure $\mathcal H^{2n}$
\begin{equation*}
P_{\mathbb{H}^n}^{\Omega, E}(\partial E)=\int_{\partial
E\cap\Omega}\bigg(\sum_{j=1}^{n}\left(\langle
X_j,n_E\rangle_{\mathbb{R}^{2n+1}}^2+\langle
Y_j,n_E\rangle_{\mathbb{R}^{2n+1}}^2\right)\bigg)^{1/2}d{\mathcal {H}}^{2n},
\end{equation*}
where $n_E=n_E(x)$ is the Euclidean unit outward normal to $\partial
E$.
\end{prop}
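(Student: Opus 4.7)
The plan is to compute the total variation defining the $\mathbb{H}^n$-perimeter by reducing it to a classical surface integral via the Euclidean divergence theorem. Recall that for a Caccioppoli set $E$, following \cite{GN},
$$P_{\mathbb{H}^n}^{\Omega,E}(\partial E)=\sup\left\{\int_E\sum_{j=1}^n\bigl(X_j a_j+Y_j b_j\bigr)\,dx:\ a_j,b_j\in C_c^1(\Omega),\ \sum_{j=1}^n(a_j^2+b_j^2)\le 1\right\}.$$
The first key observation is that if one regards $\phi:=\sum_{j=1}^n(a_j X_j+b_j Y_j)$ as a vector field in $\mathbb{R}^{2n+1}$ with components $(a_1,\dots,a_n,b_1,\dots,b_n,\sum_j(2a_jy_j-2b_jx_j))$, then its Euclidean divergence in $\mathbb{R}^{2n+1}$ equals the horizontal divergence $\sum_{j=1}^n(X_j a_j+Y_j b_j)$: indeed, differentiating the $t$-component with respect to $t$ produces $\sum_j(2y_j\partial_t a_j-2x_j\partial_t b_j)$, which combined with $\partial_{x_j}a_j+\partial_{y_j}b_j$ gives precisely $X_j a_j+Y_j b_j$ in view of $X_j=\partial_{x_j}+2y_j\partial_t$ and $Y_j=\partial_{y_j}-2x_j\partial_t$.

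Since $\partial E$ is $C^1$, the classical divergence theorem in $\mathbb{R}^{2n+1}$ then yields
$$\int_E\sum_{j=1}^n(X_j a_j+Y_j b_j)\,dx=\int_{\partial E\cap\Omega}\langle\phi,n_E\rangle_{\mathbb{R}^{2n+1}}\,d\mathcal{H}^{2n}=\int_{\partial E\cap\Omega}\sum_{j=1}^n\bigl(a_j\langle X_j,n_E\rangle+b_j\langle Y_j,n_E\rangle\bigr)\,d\mathcal{H}^{2n},$$
where the second equality comes from the same component identification (e.g.\ $\langle X_j,n_E\rangle_{\mathbb{R}^{2n+1}}=(n_E)_{x_j}+2y_j(n_E)_t$, and similarly for $Y_j$). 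At each point of $\partial E\cap\Omega$ the Cauchy-Schwarz inequality gives that the pointwise supremum of the integrand over $(a_j,b_j)$ satisfying $\sum_j(a_j^2+b_j^2)\le 1$ equals $\bigl(\sum_{j=1}^n(\langle X_j,n_E\rangle^2+\langle Y_j,n_E\rangle^2)\bigr)^{1/2}$, attained when $(a_j,b_j)$ is proportional to the horizontal projection of $n_E$. This gives the $\le$ inequality of the stated formula.

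For the reverse inequality, one constructs a sequence of admissible test fields $(a_j^{(\varepsilon)},b_j^{(\varepsilon)})\in C_c^1(\Omega)$ whose restrictions to $\partial E\cap\Omega$ approximate the pointwise Cauchy-Schwarz maximizer. Since $n_E$ is continuous on the $C^1$ hypersurface $\partial E$, its horizontal projection is continuous as well; extending this continuous horizontal field to a tubular neighborhood of $\partial E$, multiplying by a smooth cut-off supported in $\Omega$, mollifying, and renormalizing to preserve the constraint $\sum_j(a_j^2+b_j^2)\le 1$ yields the required sequence. Passage to the limit by dominated convergence produces the matching lower bound.

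The main obstacle is this approximation step: mollification and renormalization must simultaneously preserve the pointwise horizontal norm bound, maintain compact support in $\Omega$, and guarantee convergence of the surface integrals. The Euclidean $C^1$ regularity of $\partial E$ enters here essentially, through the continuity of $n_E$, which produces uniform convergence on compact subsets of $\partial E\cap\Omega$ and hence matches the upper bound. This implementation is precisely the content of \cite{capdangar}, to which the statement refers.
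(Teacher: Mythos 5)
The paper does not actually prove this proposition: it is stated as a known representation theorem and attributed to \cite{capdangar}, so there is no internal argument to compare against. Your plan is the standard proof of that result and is essentially correct: the identification of the horizontal divergence $\sum_j(X_ja_j+Y_jb_j)$ with the Euclidean divergence of the vector field with components $(a_1,\dots,a_n,b_1,\dots,b_n,\sum_j(2a_jy_j-2b_jx_j))$ is exactly right (the $t$-component contributes $\sum_j(2y_j\partial_ta_j-2x_j\partial_tb_j)$ because $x_j,y_j$ do not depend on $t$), the Gauss--Green theorem on the $C^1$ boundary converts the bulk integral into $\int_{\partial E\cap\Omega}\sum_j(a_j\langle X_j,n_E\rangle+b_j\langle Y_j,n_E\rangle)\,d\mathcal{H}^{2n}$, and the pointwise Cauchy--Schwarz optimization yields the claimed integrand and the inequality $\le$.

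The one place where your sketch is too quick is the lower bound. The pointwise maximizer is the \emph{normalized} horizontal projection of $n_E$, and while the horizontal projection $(\langle X_j,n_E\rangle,\langle Y_j,n_E\rangle)_j$ is indeed continuous on the $C^1$ surface, its normalization is discontinuous precisely at the characteristic points where that projection vanishes; "extending this continuous horizontal field" therefore does not directly produce an admissible field attaining the supremum. The standard fix is an $\varepsilon$-argument: for $\delta>0$ work on the relatively open subset of $\partial E\cap\Omega$ where the horizontal norm of $n_E$ exceeds $\delta$ (there the normalized field is continuous and can be extended, cut off, mollified and renormalized as you describe), and observe that the complementary set contributes at most $\delta\,\mathcal{H}^{2n}(\partial E\cap\Omega)$ to the target integral, so letting $\delta\to0$ recovers the matching lower bound. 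With that amendment your argument is complete and coincides with the proof in \cite{capdangar} that the paper cites.
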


We also have:
\begin{prop}\label{divergence}
If $E$ is a regular bounded open set with Euclidean ${\mathbf C}^1$
boundary and $\phi$ is a horizontal vector field,
continuously differentiable on $ \overline{\Omega} $, then
$$
\int_E \mathrm{div}_{\mathbb{H}^n}\ \phi\, dx = \int_{\partial E} \langle \phi, \nu_{\mathbb{H}^n}\rangle d P_{\mathbb{H}^n}^{E},
$$
where $\nu_{\mathbb{H}^n}(x)$ is the intrinsic horizontal unit outward normal to $\partial E$,
given by the (normalized) projection of $n_E(x)$ on the fiber $H\mathbb{H}^n_x$ of
the horizontal fiber bundle $H\mathbb{H}^n$.
\end{prop}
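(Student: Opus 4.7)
The plan is to reduce the identity to the classical Euclidean divergence theorem by exhibiting $\phi$ as a Lebesgue-measurable vector field on $\mathbb{R}^{2n+1}$ whose Euclidean divergence coincides with $\mathrm{div}_{\mathbb{H}^n}\phi$, and then identifying the resulting boundary flux with the intrinsic one via Proposition \ref{perimetro regolare}.

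First I would write $\phi=\sum_{i=1}^n(a_i X_i+b_i Y_i)$ with $a_i,b_i\in C^1(\overline{\Omega})$, and observe that, as a vector field on $\mathbb{R}^{2n+1}$ in the coordinate basis $\{\partial_{x_i},\partial_{y_i},\partial_t\}$, this is $\Phi=\bigl(a_1,\dots,a_n,b_1,\dots,b_n,\,2\sum_{i=1}^n(y_i a_i-x_i b_i)\bigr)$. A direct computation using $X_i=\partial_{x_i}+2y_i\partial_t$, $Y_i=\partial_{y_i}-2x_i\partial_t$ and the fact that $\partial_{x_i}(1)+\partial_t(2y_i)=0$ and $\partial_{y_i}(1)+\partial_t(-2x_i)=0$ gives
\begin{equation*}
\mathrm{div}_{\mathbb{H}^n}\phi=\sum_{i=1}^n\bigl(X_i a_i+Y_i b_i\bigr)=\sum_{i=1}^n\bigl[\partial_{x_i}a_i+\partial_{y_i}b_i+\partial_t(2y_i a_i-2x_i b_i)\bigr]=\mathrm{div}_{\mathbb{R}^{2n+1}}\Phi.
\end{equation*}
Since $E$ has Euclidean $C^1$ boundary, the classical Gauss--Green theorem applies and yields $\int_E\mathrm{div}_{\mathbb{H}^n}\phi\,dx=\int_{\partial E}\langle\Phi,n_E\rangle_{\mathbb{R}^{2n+1}}\,d\mathcal{H}^{2n}$.

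Next I would rewrite the boundary integrand intrinsically. Using $X_i=(e_i,0,2y_i)$ and $Y_i=(0,e_i,-2x_i)$ in $\mathbb{R}^{2n+1}$, one has $\langle X_i,n_E\rangle_{\mathbb{R}^{2n+1}}=(n_E)_{x_i}+2y_i(n_E)_t$ and $\langle Y_i,n_E\rangle_{\mathbb{R}^{2n+1}}=(n_E)_{y_i}-2x_i(n_E)_t$, so a grouping of terms gives
\begin{equation*}
\langle\Phi,n_E\rangle_{\mathbb{R}^{2n+1}}=\sum_{i=1}^n\Bigl(a_i\langle X_i,n_E\rangle_{\mathbb{R}^{2n+1}}+b_i\langle Y_i,n_E\rangle_{\mathbb{R}^{2n+1}}\Bigr).
\end{equation*}
Defining the horizontal projection $n_E^H:=\sum_{i=1}^n\bigl(\langle X_i,n_E\rangle_{\mathbb{R}^{2n+1}}X_i+\langle Y_i,n_E\rangle_{\mathbb{R}^{2n+1}}Y_i\bigr)\in H\mathbb{H}^n_x$ and using the intrinsic inner product on $H\mathbb{H}^n_x$ that makes $\{X_i,Y_i\}$ orthonormal, the above identity becomes precisely $\langle\Phi,n_E\rangle_{\mathbb{R}^{2n+1}}=\langle\phi,n_E^H\rangle_{\mathbb{H}^n}$.

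Finally I would invoke Proposition \ref{perimetro regolare}: the density of $dP_{\mathbb{H}^n}^E$ with respect to $d\mathcal{H}^{2n}$ is exactly $\bigl(\sum_{j=1}^n(\langle X_j,n_E\rangle^2_{\mathbb{R}^{2n+1}}+\langle Y_j,n_E\rangle^2_{\mathbb{R}^{2n+1}})\bigr)^{1/2}=|n_E^H|_{\mathbb{H}^n}$. Hence, setting $\nu_{\mathbb{H}^n}:=n_E^H/|n_E^H|_{\mathbb{H}^n}$ on the (open and full-measure) set where $n_E^H\neq 0$, one obtains $\langle\Phi,n_E\rangle_{\mathbb{R}^{2n+1}}\,d\mathcal{H}^{2n}=\langle\phi,\nu_{\mathbb{H}^n}\rangle_{\mathbb{H}^n}\,dP_{\mathbb{H}^n}^E$, and combining this with the Euclidean Gauss--Green identity proved above completes the proof. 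The only genuinely delicate point is the bookkeeping in the last step: one must check that the characteristic set $\{n_E^H=0\}$ is $dP_{\mathbb{H}^n}^E$-negligible (so the division by $|n_E^H|_{\mathbb{H}^n}$ causes no issue) and that $\langle\phi,\nu_{\mathbb{H}^n}\rangle_{\mathbb{H}^n}\,dP_{\mathbb{H}^n}^E$ is well-defined as a signed measure regardless; this is immediate since $|n_E^H|_{\mathbb{H}^n}\,d\mathcal{H}^{2n}=dP_{\mathbb{H}^n}^E$ absorbs the normalization factor.
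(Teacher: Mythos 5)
Your argument is correct, and it is essentially the standard proof of this fact. Note that the paper itself offers no proof of Proposition \ref{divergence}: it is quoted from the literature (the representation result Proposition \ref{perimetro regolare} is attributed to \cite{capdangar}, and the surrounding discussion points to \cite{GN}, \cite{FSSC_houston}, \cite{FSSC_CAG}, \cite{FSSC_step2} for the perimeter machinery), so there is no in-paper proof to compare against. Your three steps are all sound: the identity $\mathrm{div}_{\mathbb{R}^{2n+1}}\Phi=\sum_i(X_ia_i+Y_ib_i)$ holds precisely because each $X_i$ and $Y_i$ is divergence-free as a Euclidean vector field (the coefficient $2y_i$ of $\partial_t$ in $X_i$ does not depend on $t$, etc.); the regrouping $\langle\Phi,n_E\rangle_{\mathbb{R}^{2n+1}}=\sum_i\bigl(a_i\langle X_i,n_E\rangle+b_i\langle Y_i,n_E\rangle\bigr)$ is exactly the intrinsic pairing $\langle\phi,n_E^H\rangle$ in the metric that makes $\{X_i,Y_i\}$ orthonormal; and the passage to $\nu_{\mathbb{H}^n}\,dP^E_{\mathbb{H}^n}$ is harmless because the density $|n_E^H|$ of the perimeter measure cancels the normalization, while on the characteristic set $\{n_E^H=0\}$ both the integrand $\langle\phi,n_E^H\rangle$ and the perimeter density vanish, so no negligibility of that set with respect to $\mathcal{H}^{2n}$ is actually needed (this is consistent with the Remark following the Proposition, which invokes \cite{magnani} only to guarantee that $\nu_{\mathbb{H}^n}$ is defined $P^E_{\mathbb{H}^n}$-a.e.). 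In short: correct, complete, and the same route the cited sources take.
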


\begin{oss}
The definition of $\nu_{\mathbb{H}^n}$ is well done, since $H\mathbb{H}^n_x$ is transversal to
the tangent space of $E$ at $x,$ for  $P_{\mathbb{H}^n}^{E}(\partial E)$-a.e. $x\in\partial E$
(see \cite{magnani}).
\end{oss}

 Now, adapting the approach described in \cite{ACF} and recalled in Section \ref{Euclidean_setting}  to the Heisenberg case, we conclude, by applying the definition of solution in the sense of the domain variation to the functional
 \[
 \mathcal{E}_{\mathbb{H}^n}(v):=\int_{\Omega}\left(|\nabla_{\mathbb{H}^n} v|^2+\chi_{\{v>0\}}\right)dx,
\]
$\Omega \subset\mathbb{H}^n,$
 that the parallel two-phase problem to (\ref{two_phase_classical})
  is, see \cite{Fe}:
\begin{equation}\label{two_phase_Heisenberg}
\begin{cases}
\Delta_{\mathbb{H}^n} u=0& \mbox{in }\Omega^+(u):= \{x\in \Omega:\hspace{0.1cm} u(x)>0\},\\
\Delta_{\mathbb{H}^n} u=0& \mbox{in }\Omega^-(u):=\mbox{Int}(\{x\in \Omega:\hspace{0.1cm} u(x)\leq 0\}),\\
|\nabla_{\mathbb{H}^n} u^+|^2-|\nabla_{\mathbb{H}^n} u^-|^2=1&\mbox{on }\mathcal{F}(u):=\partial \Omega^+(u)\cap \Omega.
\end{cases}
\end{equation}
Thus, it seems natural to consider, as a candidate for an Alt-Caffarelli-Friedman monotonicity formula in the Heisenberg group, the following function:
\begin{equation}\label{monofondformula}
J_{\beta,\mathbb{H}^n}(r)= r^{-\beta}\int_{B_r^{{\mathbb{H}^n}}(0)}\frac{\mid\nabla_{\mathbb{H}^n} u^+\mid^2}{|\zeta |_{\mathbb{H}^n}^{Q-2}}d\zeta\int_{B_r^{{\mathbb{H}^n}}(0)}\frac{\mid\nabla_{\mathbb{H}^n} u^-\mid^2}{|\zeta|_{\mathbb{H}^n}^{Q-2}}d\zeta,
\end{equation}
where $\beta>0$ is a suitable fixed exponent and $u^+:=\sup\{u,0\}$ and $u^-:=\sup\{-u,0\},$ being $0\in \mathcal{F}(u).$

 \section{Few computations in the Heisenberg group}\label{basictoolsinHeisenberg}
 
 In this section we mainly discuss some results proved in \cite{FeFo}.
\begin{lem}\label{limitato}
There exists a positive constant $c=c(Q)$ such that for every nonnegative $\mathbb{H}^n-$subharmonic function in $C(B_1^{\mathbb{H}^n}(0)),$ if $u(0)=0,$ then there exists $r_0$ such that for every $0<\rho<r_0:$
$$
\int_{B_\rho^{\mathbb{H}^n}(0)}\frac{\mid\nabla_{\mathbb{H}^n} u(\zeta)\mid^2}{|\zeta|_{\mathbb{H}^n}^{Q-2}}d\zeta\leq  c\rho^{-Q}\int_{B_{2\rho}^{\mathbb{H}^n}(0)\setminus B_{\rho}^{\mathbb{H}^n}(0)}u^2(\zeta)d\zeta.
$$
\end{lem}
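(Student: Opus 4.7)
The plan is to establish a weighted Caccioppoli-type inequality, exploiting $u\geq 0$, $\Delta_{\mathbb{H}^n}u\geq 0$, $u(0)=0$, and the fact that $|\zeta|_{\mathbb{H}^n}^{2-Q}$ is (up to a multiplicative constant) the fundamental solution of $\Delta_{\mathbb{H}^n}$ with pole at the origin, as recalled in Section \ref{secondsection}.

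First, since $u\ge 0$ and $\Delta_{\mathbb{H}^n}u\ge 0$ one has the pointwise bound
$$\Delta_{\mathbb{H}^n}(u^2) \;=\; 2|\nabla_{\mathbb{H}^n}u|^2+2u\,\Delta_{\mathbb{H}^n}u \;\geq\; 2|\nabla_{\mathbb{H}^n}u|^2,$$
which is the Heisenberg analogue of the inequality used in the Euclidean proof of Section \ref{Euclidean_setting}. Then choose a radial cutoff $\eta=\eta(|\zeta|_{\mathbb{H}^n})$ with $\eta\equiv 1$ on $B_\rho^{\mathbb{H}^n}(0)$, $\eta\equiv 0$ outside $B_{2\rho}^{\mathbb{H}^n}(0)$, and $|\nabla_{\mathbb{H}^n}\eta|\le c/\rho$ supported in the annulus $B_{2\rho}^{\mathbb{H}^n}(0)\setminus B_\rho^{\mathbb{H}^n}(0)$; this is possible because $|\nabla_{\mathbb{H}^n}|\zeta|_{\mathbb{H}^n}|\le 1$. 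I set $r_0:=1/2$, so that $\mathrm{supp}\,\eta\subset B_1^{\mathbb{H}^n}(0)$ whenever $\rho<r_0$.

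The core step is to test $\Delta_{\mathbb{H}^n}u\ge 0$ against the nonnegative function $\psi:=\eta^2\,u\,w$, with $w:=|\zeta|_{\mathbb{H}^n}^{2-Q}$. Integration by parts produces $\int \langle\nabla_{\mathbb{H}^n}\psi,\nabla_{\mathbb{H}^n}u\rangle\le 0$; expanding $\nabla_{\mathbb{H}^n}\psi$ by the Leibniz rule and rewriting $u\nabla_{\mathbb{H}^n}u=\tfrac{1}{2}\nabla_{\mathbb{H}^n}(u^2)$, a further integration by parts on the resulting $\eta^2\langle\nabla_{\mathbb{H}^n}(u^2),\nabla_{\mathbb{H}^n}w\rangle$ term—together with $\Delta_{\mathbb{H}^n}w=-c_Q\,\delta_0$ in the sense of distributions, whose $\delta_0$ contribution is killed by $u(0)=0$—leads to
$$\int \eta^2 w\,|\nabla_{\mathbb{H}^n}u|^2 \;\le\; -2\!\int \eta u w\,\langle\nabla_{\mathbb{H}^n}u,\nabla_{\mathbb{H}^n}\eta\rangle + \int \eta u^2\,\langle\nabla_{\mathbb{H}^n}\eta,\nabla_{\mathbb{H}^n}w\rangle.$$
Young's inequality absorbs half of the left-hand side from the first right-hand term. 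On $\mathrm{supp}\,\nabla_{\mathbb{H}^n}\eta$ one has $|\zeta|_{\mathbb{H}^n}\sim\rho$, hence $w\sim\rho^{2-Q}$, $|\nabla_{\mathbb{H}^n}w|\le c\,\rho^{1-Q}$ and $|\nabla_{\mathbb{H}^n}\eta|\le c/\rho$; both remaining terms are then controlled by $c(Q)\,\rho^{-Q}\!\int_{B_{2\rho}^{\mathbb{H}^n}(0)\setminus B_\rho^{\mathbb{H}^n}(0)} u^2\,d\zeta$, and bounding the left-hand side from below by the integral over $B_\rho^{\mathbb{H}^n}(0)$ yields the claim.

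The main technical obstacle is the rigorous justification of $\psi=\eta^2 u w$ as a test function, since $w$ is singular at the origin. I would handle this either by excising a small gauge ball $B_\varepsilon^{\mathbb{H}^n}(0)$, carrying out the integration by parts on the annular region $\{\varepsilon<|\zeta|_{\mathbb{H}^n}<2\rho\}$, and then letting $\varepsilon\to 0^+$, or by regularizing $w$ to $w_\varepsilon:=(|\zeta|_{\mathbb{H}^n}^2+\varepsilon^2)^{(2-Q)/2}$ and passing to the limit at the end. In either case, the continuity of $u$ together with $u(0)=0$ ensures $\sup_{B_\varepsilon^{\mathbb{H}^n}(0)}u\to 0$, which—combined with the gauge-ball area scaling—makes the boundary contributions on $\partial B_\varepsilon^{\mathbb{H}^n}(0)$ disappear in the limit. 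Everything else is the standard weighted Caccioppoli mechanism transported to the horizontal framework.
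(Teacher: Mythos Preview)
Your argument is correct. The paper itself does not supply a proof of this lemma: Section~\ref{basictoolsinHeisenberg} merely states it, referring to \cite{FeFo} for the details, so there is no in-paper proof to compare against line by line. That said, your approach is precisely the expected one, namely the Heisenberg transcription of the Euclidean computation displayed around \eqref{upper-bound-denominator}: replace $|x|^{2-n}$ by the Folland fundamental solution $|\zeta|_{\mathbb{H}^n}^{2-Q}$, use $\Delta_{\mathbb{H}^n}(u^2)\ge 2|\nabla_{\mathbb{H}^n}u|^2$, integrate by parts, and exploit $u(0)=0$ to kill the pole contribution. The only structural difference is that the form of the right-hand side in the lemma (a solid integral over the annulus $B_{2\rho}^{\mathbb{H}^n}(0)\setminus B_\rho^{\mathbb{H}^n}(0)$ rather than a surface integral on $\partial B_\rho^{\mathbb{H}^n}(0)$) forces the introduction of the cutoff $\eta$, exactly as you do; this is the standard way to trade a boundary term for an annular volume term and is almost certainly what \cite{FeFo} does as well. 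Your handling of the singularity of $w$ via excision or regularization, together with the observation that $u(0)=0$ and the gauge-area scaling make the $\partial B_\varepsilon^{\mathbb{H}^n}(0)$ contributions vanish, is the correct justification.
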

 
 \begin{lem}\label{lemmaapplicant}
For every nonnegative $\mathbb{H}^n-$subharmonic functions  $u_i\in C(B_1^{\mathbb{H}^n}(0)),$ $i=1,2,$  such that $u_1u_2=0$  and  $u_1(0)=u_2(0)=0,$  we have
 $$
 \frac{J_{\beta,\mathbb{H}^n}'(1)}{J_{\beta,\mathbb{H}^n}(1)}=\frac{\int_{\partial B_1^{{\mathbb{H}^n}}(0)}\frac{\mid\nabla_{\mathbb{H}^n} u_1(\kappa)\mid^2}{\sqrt{\mid x\mid^2+\mid y\mid^2}}dP^{B_1^{{\mathbb{H}^n}}(0)}_{\mathbb{H}^n}(\kappa)}{\int_{B_1^{{\mathbb{H}^n}}(0)}\frac{\mid\nabla_{\mathbb{H}^n} u_1(\kappa)\mid^2}{|\kappa |_{\mathbb{H}^n}^{Q-2}}d\kappa}+\frac{\int_{\partial B_1^{{\mathbb{H}^n}}(0)}\frac{\mid\nabla_{\mathbb{H}^n} u_2(\kappa)\mid^2}{\sqrt{\mid x\mid^2+\mid y\mid^2}}dP^{B_1^{{\mathbb{H}^n}}(0)}_{\mathbb{H}^n}(\kappa)}{\int_{B_1^{{\mathbb{H}^n}}(0)}\frac{\mid\nabla_{\mathbb{H}^n} u_2(\kappa)\mid^2}{|\kappa |_{\mathbb{H}^n}^{Q-2}}d\kappa}-\beta.
 $$
Moreover, $J_{\beta,\mathbb{H}^n}$ will be monotone increasing in the interval $[0,r_0),$ for some $r_0>0,$ if and only if $\frac{J'_{\beta,\mathbb{H}^n}(1)}{J_{\beta,\mathbb{H}^n}(1)}\geq 0$ for every $u_1,$ $u_2$ satisfying the hypotheses of this lemma.
\end{lem}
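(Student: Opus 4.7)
The plan is to mirror the Euclidean computation of Section \ref{Euclidean_setting}, replacing radial polar coordinates by the coarea formula applied to the Koranyi gauge $|\cdot|_{\mathbb{H}^n}$. Setting
\[
I_i(r):=\int_{B_r^{\mathbb{H}^n}(0)}\frac{|\nabla_{\mathbb{H}^n}u_i|^2}{|\zeta|_{\mathbb{H}^n}^{Q-2}}\,d\zeta,\qquad i=1,2,
\]
Lemma \ref{limitato} ensures $I_i(r)<\infty$ for $r$ in some right-neighbourhood of $0$, so logarithmic differentiation of $J_{\beta,\mathbb{H}^n}(r)=r^{-\beta}I_1(r)I_2(r)$ gives
\[
\frac{J'_{\beta,\mathbb{H}^n}(r)}{J_{\beta,\mathbb{H}^n}(r)}=-\frac{\beta}{r}+\frac{I_1'(r)}{I_1(r)}+\frac{I_2'(r)}{I_2(r)},
\]
and the task reduces to identifying $I_i'(1)/I_i(1)$ with the stated surface quotient.

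Next I would invoke the coarea formula with the Lipschitz function $g(\zeta)=|\zeta|_{\mathbb{H}^n}$, whose horizontal gradient was computed in Section \ref{secondsection} to satisfy $|\nabla_{\mathbb{H}^n}|\zeta|_{\mathbb{H}^n}|=\sqrt{|x|^2+|y|^2}/|\zeta|_{\mathbb{H}^n}$. Multiplying and dividing the integrand of $I_i$ by $|\nabla_{\mathbb{H}^n} g|$ and applying the perimeter representation of Proposition \ref{perimetro regolare} to each level set $\{g=\rho\}=\partial B_\rho^{\mathbb{H}^n}(0)$, one obtains
\[
I_i(r)=\int_0^r\int_{\partial B_\rho^{\mathbb{H}^n}(0)}\frac{|\nabla_{\mathbb{H}^n}u_i|^2}{\rho^{Q-3}\sqrt{|x|^2+|y|^2}}\,dP^{B_\rho^{\mathbb{H}^n}(0)}_{\mathbb{H}^n}(\zeta)\,d\rho.
\]
Differentiating in $r$ and evaluating at $r=1$ (where $|\zeta|_{\mathbb{H}^n}=1$) yields exactly the numerator in the statement. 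Combined with the logarithmic-derivative identity above, this produces the displayed formula for $J'_{\beta,\mathbb{H}^n}(1)/J_{\beta,\mathbb{H}^n}(1)$.

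For the \emph{if and only if} part I would rely on the intrinsic dilations $\delta_r$. Given admissible $u_1,u_2$ on $B_1^{\mathbb{H}^n}(0)$, set $(u_i)_r(\zeta):=u_i(\delta_r\zeta)$. The commutation relations $X_j(f\circ\delta_r)=r(X_jf)\circ\delta_r$ and $Y_j(f\circ\delta_r)=r(Y_jf)\circ\delta_r$, together with the $\delta_r$-homogeneities of the gauge, of Lebesgue measure (Jacobian $r^Q$), and of $\Delta_{\mathbb{H}^n}$, show that $(u_1)_r,(u_2)_r$ still satisfy the hypotheses and that a change of variables $\zeta=\delta_r\eta$ gives
\[
I_i(r)=\int_{B_1^{\mathbb{H}^n}(0)}\frac{|\nabla_{\mathbb{H}^n}(u_i)_r(\eta)|^2}{|\eta|_{\mathbb{H}^n}^{Q-2}}\,d\eta.
\]
Consequently $r\,I_i'(r)/I_i(r)$ evaluated on $(u_1,u_2)$ coincides with $I_i'(1)/I_i(1)$ evaluated on the rescaled pair, and therefore
\[
r\,\frac{J'_{\beta,\mathbb{H}^n}(r)}{J_{\beta,\mathbb{H}^n}(r)}\bigg|_{(u_1,u_2)}=\frac{J'_{\beta,\mathbb{H}^n}(1)}{J_{\beta,\mathbb{H}^n}(1)}\bigg|_{((u_1)_r,(u_2)_r)}.
\]
Taking the infimum over all admissible data yields the claimed equivalence: monotonicity of $J_{\beta,\mathbb{H}^n}$ on $[0,r_0)$ is the same assertion as nonnegativity of $J'_{\beta,\mathbb{H}^n}(1)/J_{\beta,\mathbb{H}^n}(1)$ over all admissible pairs.

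The principal technical obstacle is the rigorous use of the coarea formula across the characteristic set of $\partial B_r^{\mathbb{H}^n}(0)$ — the two poles where $\nabla_{\mathbb{H}^n}|\zeta|_{\mathbb{H}^n}$ vanishes and the weight $1/\sqrt{|x|^2+|y|^2}$ blows up. This is dealt with by observing that the characteristic set is $P_{\mathbb{H}^n}$-negligible (Proposition \ref{perimetro regolare} and the subsequent remark), so the boundary integrals make sense and the reduction from volume to surface measure is legitimate; the remaining bookkeeping in Heisenberg coordinates is routine.
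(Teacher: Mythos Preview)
Your proposal is correct and follows essentially the same route as the paper: logarithmic differentiation of $J_{\beta,\mathbb{H}^n}(r)=r^{-\beta}I_1(r)I_2(r)$, the coarea formula for the gauge $|\cdot|_{\mathbb{H}^n}$ (using $|\nabla_{\mathbb{H}^n}|\zeta|_{\mathbb{H}^n}|=\sqrt{|x|^2+|y|^2}/|\zeta|_{\mathbb{H}^n}$) to identify $I_i'(1)$ with the stated surface integral, and the dilation rescaling $(u_i)_r=u_i\circ\delta_r$ for the equivalence --- exactly mirroring the Euclidean computation of Section~\ref{Euclidean_setting}. The paper itself defers the full Heisenberg details to \cite{FeFo}, but your argument is the natural transcription and the handling of the characteristic poles via their $P_{\mathbb{H}^n}$-negligibility is the standard remedy.
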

In order to obtain some estimates of $\frac{J'_{\beta,\mathbb{H}^n}(1)}{J_{\beta,\mathbb{H}^n}(1)},$ we need to read the Kohn-Laplace operator $\Delta_{\mathbb{H}^n}$ in terms of radial coordinates. The problem has been faced in \cite{Jerison}, by using an abstract and elegant approach, see also \cite{Greiner} and \cite{Biri}. In \cite{FeFo} we describe the $\mathbb{H}^1$ case in details, with an explicit computation. 

Precisely, we consider the following coordinates in $\mathbb{H}^{1}:$ 
\begin{equation}\label{pol-coord}
T(\rho,\varphi,\theta):=
\begin{cases}
x=\rho \sqrt{\sin \varphi}\cos\theta \\
y=\rho \sqrt{\sin \varphi}\sin \theta\\
t=\rho^{2}\cos \varphi.
\end{cases}
\end{equation}

From \eqref{pol-coord}, we obtain the values of $\rho,$ $\varphi$ and $\theta$ with respect to the cartesian coordinates $x,$ $y$ and $t,$ that is: 
\begin{equation}\label{cartes-coord}
\begin{cases}
\rho=((x^{2}+y^{2})^{2}+t^{2})^{1/4}\\
\theta=\arctan\left({\frac{y}{x}}\right)\\
\varphi=\arccos\left(\frac{t}{\rho^{2}}\right).
\end{cases}
\end{equation}
Recalling the vector fields
\begin{equation}\label{heisen-vect-fields}
\begin{cases}
X=\frac{\partial}{\partial x}+2y\frac{\partial}{\partial t}\\
Y=\frac{\partial}{\partial y}-2x\frac{\partial}{\partial t},
\end{cases}
\end{equation}
and the operators:
\begin{equation}\label{grad-lapl-horiz}
\nabla_{\mathbb{H}^1}\equiv(X,Y),\quad \Delta_{\mathbb{H}^1}=X^{2}+Y^{2},
\end{equation}
we determine the following: $\nabla_{\mathbb{H}^1}\rho,$ $\nabla_{\mathbb{H}^1}\theta,$ $\nabla_{\mathbb{H}^1}\varphi,$ by using \eqref{heisen-vect-fields}, \eqref{cartes-coord} and \eqref{grad-lapl-horiz}.
\begin{lem}
Let $\rho, \varphi, \theta$ defined as in (\ref{pol-coord}). Then:
\[
\nabla_{\mathbb{H}^{1}}\rho=\rho^{-3}((x^{2}+y^{2})x+ty,(x^{2}+y^{2})y-tx),\quad \nabla_{\mathbb{H}^{1}}\varphi=\frac{2}{\rho(x^{2}+y^{2})}\left(t\nabla_{\mathbb{H}^{1}}\rho+\rho(-y,x)\right)
\]
and 
$$
\nabla_{\mathbb{H}^{1}}\theta=\frac{1}{x^{2}+y^{2}}(-y,x).
$$
\end{lem}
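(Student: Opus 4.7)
The proof is essentially a bookkeeping exercise: each of the three identities follows from a direct chain rule computation applied to the explicit formulas \eqref{cartes-coord}, using the expressions \eqref{heisen-vect-fields} for the horizontal vector fields $X$ and $Y$. The plan is to carry out the three computations separately and then, for $\varphi$, to rewrite the answer in the condensed form that involves $\nabla_{\mathbb{H}^1}\rho$.

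First, I would dispatch $\nabla_{\mathbb{H}^1}\rho$. Starting from $\rho^{4}=(x^{2}+y^{2})^{2}+t^{2}$, differentiate implicitly to get $\partial_x\rho = \rho^{-3}(x^{2}+y^{2})x$, $\partial_y\rho = \rho^{-3}(x^{2}+y^{2})y$, and $\partial_t\rho = \tfrac{1}{2}\rho^{-3}t$. Plugging into $X\rho = \partial_x\rho + 2y\partial_t\rho$ and $Y\rho = \partial_y\rho - 2x\partial_t\rho$ gives the claimed formula after trivial collection of terms. Next, $\theta=\arctan(y/x)$ is independent of $t$, so the vertical drift components of $X$ and $Y$ drop out immediately, and $\nabla_{\mathbb{H}^1}\theta$ coincides with the planar Euclidean gradient of $\arctan(y/x)$, namely $(x^{2}+y^{2})^{-1}(-y,x)$.

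The only slightly substantive step is $\nabla_{\mathbb{H}^1}\varphi$. Differentiating $\varphi=\arccos(t/\rho^{2})$ produces
\[
\partial_{x_i}\varphi \;=\; -\frac{1}{\sqrt{1-(t/\rho^{2})^{2}}}\,\partial_{x_i}\!\!\left(\frac{t}{\rho^{2}}\right),
\]
where $x_i\in\{x,y,t\}$. The key simplification is the identity $1-(t/\rho^{2})^{2}=(x^{2}+y^{2})^{2}/\rho^{4}$, which comes directly from $\rho^{4}-t^{2}=(x^{2}+y^{2})^{2}$ and yields $\sqrt{1-(t/\rho^{2})^{2}}=(x^{2}+y^{2})/\rho^{2}$. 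Using also $\partial_t(t/\rho^{2})=(x^{2}+y^{2})^{2}\rho^{-6}$ (which again uses $\rho^{4}-t^{2}=(x^{2}+y^{2})^{2}$), one obtains $\partial_t\varphi = -(x^{2}+y^{2})\rho^{-4}$ and $\partial_{x_i}\varphi = \tfrac{2t}{\rho(x^{2}+y^{2})}\,\partial_{x_i}\rho$ for $x_i\in\{x,y\}$. Assembling $X\varphi$ and $Y\varphi$ and factoring gives
\[
\nabla_{\mathbb{H}^1}\varphi \;=\; \frac{2t}{\rho(x^{2}+y^{2})}\nabla_{\mathbb{H}^1}\rho \;+\; \frac{2}{x^{2}+y^{2}}(-y,x),
\]
which is exactly the announced expression after pulling out the common factor $2/(\rho(x^{2}+y^{2}))$.

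No real obstacle is expected; the only point where one can easily make algebraic errors is the reduction of the radical $\sqrt{1-(t/\rho^{2})^{2}}$ via the defining relation for $\rho$, and the cancellation that allows the two terms contributing to $X\varphi$ (and $Y\varphi$) to be combined cleanly into a multiple of $t\nabla_{\mathbb{H}^1}\rho$ plus a term orthogonal to the radial direction in the $(x,y)$-plane.
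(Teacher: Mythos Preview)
Your proposal is correct and follows exactly the approach the paper indicates: the lemma is stated without a detailed proof, the paper simply saying that the gradients are determined ``by using \eqref{heisen-vect-fields}, \eqref{cartes-coord} and \eqref{grad-lapl-horiz}'', i.e.\ by a direct chain-rule computation, which is precisely what you carry out. The only step you leave implicit---that in $X\varphi=\partial_x\varphi+2y\,\partial_t\varphi$ the $\partial_t\varphi$ contribution combines with the extra $2y\,\partial_t\rho$ hidden in $X\rho$ via the identity $(x^2+y^2)^2+t^2=\rho^4$---is routine algebra and you flag it explicitly in your final remark.
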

In addition, we obtain the properties described in the following lemma.
\begin{lem}\label{raccolta}
Let $\rho, \varphi, \theta$ defined as in (\ref{pol-coord}). Then:
\begin{align*}
\left|\nabla_{\mathbb{H}^{1}}\varphi\right|^{2}=\frac{4(x^{2}+y^{2})}{\rho^{4}},\nonumber\quad
\left|\nabla_{\mathbb{H}^{1}}\rho\right|^{2}=\frac{x^{2}+y^{2}}{\rho^{2}},\nonumber\quad
\left|\nabla_{\mathbb{H}^{1}}\theta\right|^{2}=\frac{1}{x^{2}+y^{2}}.
\end{align*}
Moreover, it results:
$$\langle\nabla_{\mathbb{H}^{1}}\varphi, \nabla_{\mathbb{H}^{1}}\rho\rangle=0\nonumber,\quad \langle\nabla_{\mathbb{H}^{1}}\rho, \nabla_{\mathbb{H}^{1}}\theta\rangle=-\frac{\cos\varphi}{\rho},\quad \langle\nabla_{\mathbb{H}^{1}}\varphi,\nabla_{\mathbb{H}^{1}}\theta\rangle=\frac{2(x^{2}+y^{2})}{\rho^{4}}$$
and
$$
\Delta_{\mathbb{H}^1}\theta=0, \quad \Delta_{\mathbb{H}^1}\rho=\frac{3(x^{2}+y^{2})}{\rho^{3}},\quad \Delta_{\mathbb{H}^1}\varphi=\frac{4\cos \varphi}{\rho^{2}}.
$$
\end{lem}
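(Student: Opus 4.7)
The plan is a direct computation, organized so as to exploit a handful of simple algebraic identities satisfied by the components of the horizontal gradients. The ingredients are the explicit formulas for $\nabla_{\mathbb{H}^{1}}\rho$, $\nabla_{\mathbb{H}^{1}}\varphi$, $\nabla_{\mathbb{H}^{1}}\theta$ furnished by the preceding lemma, the gauge relation $\rho^{4}=(x^{2}+y^{2})^{2}+t^{2}$ (equivalently $\rho^{2}-t^{2}/\rho^{2}=(x^{2}+y^{2})^{2}/\rho^{2}$ and $\cos\varphi=t/\rho^{2}$), and the general formulas
\[
\left|\nabla_{\mathbb{H}^{n}}|P|_{\mathbb{H}^{n}}\right|^{2}=(|\xi|^{2}+|\eta|^{2})|P|_{\mathbb{H}^{n}}^{-2},\qquad \Delta_{\mathbb{H}^{n}}|P|_{\mathbb{H}^{n}}=(2n+1)(|\xi|^{2}+|\eta|^{2})|P|_{\mathbb{H}^{n}}^{-3}
\]
already derived in Section \ref{secondsection}. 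Setting $A:=(x^{2}+y^{2})x+ty$ and $B:=(x^{2}+y^{2})y-tx$, so that $\nabla_{\mathbb{H}^{1}}\rho=\rho^{-3}(A,B)$, I would first record once and for all the two algebraic identities
\[
A^{2}+B^{2}=(x^{2}+y^{2})\rho^{4},\qquad xB-yA=-t(x^{2}+y^{2}),
\]
which drive every subsequent cancellation.

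The easy items then follow at once. The identity $|\nabla_{\mathbb{H}^{1}}\theta|^{2}=1/(x^{2}+y^{2})$ is read off directly from $\nabla_{\mathbb{H}^{1}}\theta=(x^{2}+y^{2})^{-1}(-y,x)$; the formulas $|\nabla_{\mathbb{H}^{1}}\rho|^{2}=(x^{2}+y^{2})/\rho^{2}$ and $\Delta_{\mathbb{H}^{1}}\rho=3(x^{2}+y^{2})/\rho^{3}$ are the $n=1$ specializations of the two general identities above; and $\Delta_{\mathbb{H}^{1}}\theta=0$ follows by a short application of $X^{2}+Y^{2}$ to $\arctan(y/x)$, in which the two contributions $\pm 2xy/(x^{2}+y^{2})^{2}$ cancel. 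For the inner products I would substitute the expression $\nabla_{\mathbb{H}^{1}}\varphi=\frac{2}{\rho(x^{2}+y^{2})}(t\nabla_{\mathbb{H}^{1}}\rho+\rho(-y,x))$ and use the relation for $xB-yA$ to get $\langle(-y,x),\nabla_{\mathbb{H}^{1}}\rho\rangle=-t(x^{2}+y^{2})/\rho^{3}$; the orthogonality $\langle\nabla_{\mathbb{H}^{1}}\varphi,\nabla_{\mathbb{H}^{1}}\rho\rangle=0$ is then an immediate cancellation, $|\nabla_{\mathbb{H}^{1}}\varphi|^{2}$ collapses via the gauge identity to $4(x^{2}+y^{2})/\rho^{4}$, and the remaining inner products $\langle\nabla_{\mathbb{H}^{1}}\rho,\nabla_{\mathbb{H}^{1}}\theta\rangle=-\cos\varphi/\rho$ and $\langle\nabla_{\mathbb{H}^{1}}\varphi,\nabla_{\mathbb{H}^{1}}\theta\rangle=2(x^{2}+y^{2})/\rho^{4}$ are one-line expansions.

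The genuinely nontrivial item, and the point where I expect the bulk of the work to lie, is $\Delta_{\mathbb{H}^{1}}\varphi=4\cos\varphi/\rho^{2}$. I would treat it through the chain rule applied to $\varphi=\arccos(t/\rho^{2})$: with $s=t/\rho^{2}$ and $f=\arccos$,
\[
\Delta_{\mathbb{H}^{1}}\varphi=f''(s)\,|\nabla_{\mathbb{H}^{1}}s|^{2}+f'(s)\,\Delta_{\mathbb{H}^{1}}s,
\]
and the identity $1-s^{2}=(x^{2}+y^{2})^{2}/\rho^{4}$ gives $f'(s)=-\rho^{2}/(x^{2}+y^{2})$ and $f''(s)=-t\rho^{4}/(x^{2}+y^{2})^{3}$. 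The scalars $|\nabla_{\mathbb{H}^{1}}s|^{2}$ and $\Delta_{\mathbb{H}^{1}}s$ are then computed by Leibniz, using $Xt=2y$, $Yt=-2x$, $\Delta_{\mathbb{H}^{1}}t=0$, the already known $X\rho=A/\rho^{3}$, $Y\rho=B/\rho^{3}$, $\Delta_{\mathbb{H}^{1}}\rho=3(x^{2}+y^{2})/\rho^{3}$, and the easily verified auxiliary identities $XA=YB=3(x^{2}+y^{2})$. Invoking once more the two engine identities for $A^{2}+B^{2}$ and $xB-yA$, one finds the clean expressions $|\nabla_{\mathbb{H}^{1}}s|^{2}=4(x^{2}+y^{2})^{3}/\rho^{8}$ and $\Delta_{\mathbb{H}^{1}}s=-8t(x^{2}+y^{2})/\rho^{6}$; substituting into the chain rule produces $\Delta_{\mathbb{H}^{1}}\varphi=-4t/\rho^{4}+8t/\rho^{4}=4t/\rho^{4}=4\cos\varphi/\rho^{2}$, as claimed. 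Everything is elementary, but this is the step in which the algebra is densest, and the two identities for $A$ and $B$ really earn their keep.
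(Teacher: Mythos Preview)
Your proposal is correct in all details; the two algebraic engine identities $A^{2}+B^{2}=(x^{2}+y^{2})\rho^{4}$ and $xB-yA=-t(x^{2}+y^{2})$ do indeed drive every cancellation, and your chain-rule computation of $\Delta_{\mathbb{H}^{1}}\varphi$ via $s=t/\rho^{2}$ (exploiting $\Delta_{\mathbb{H}^{1}}\rho^{-2}=0$, the fundamental-solution identity) is the cleanest route to that item. The paper itself does not supply a proof of this lemma: it states the result and refers the reader to \cite{FeFo} for the explicit computation, so your argument is precisely the kind of direct verification the authors have in mind.
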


 Let now $\nabla_{\mathbb{H}^1}u(P)\in H\mathbb{H}^1_P$ and  define
 $$
 e_\rho:=\frac{\nabla_{\mathbb{H}^1}\rho}{|\nabla_{\mathbb{H}^1}\rho|},\quad \mbox{and}\quad  e_\varphi:=\frac{\nabla_{\mathbb{H}^1}\varphi}{|\nabla_{\mathbb{H}^1}\varphi|}.
 $$
 We observe that $\langle e_\rho,e_\varphi\rangle_{\mathbb{R}^2}=0,$ see Lemma \ref{raccolta}.  Then, whenever $e_\rho,e_\varphi$ exist  we have:
 $$
 \mbox{span}\{e_\rho(P),e_\varphi(P)\}= H\mathbb{H}^1_P.
 $$ 
 As a consequence,
 $$
 \nabla_{\mathbb{H}^1}u(P)=\langle \nabla_{\mathbb{H}^1}u(P),e_\rho(P)\rangle e_\rho(P)+\langle \nabla_{\mathbb{H}^1}u(P),e_\varphi(P)\rangle e_\varphi(P)
 $$
 and denoting $\nabla^\rho_{\mathbb{H}^1}u(P)=\langle \nabla_{\mathbb{H}^1}u(P),e_\rho(P)\rangle e_\rho(P)$ and
 $\nabla^\varphi_{\mathbb{H}^1}u(P)=\langle \nabla_{\mathbb{H}^1}u(P),e_\varphi(P)\rangle e_\varphi(P),$ we have
 $$
 |\nabla_{\mathbb{H}^1}u(P)|^2=\langle \nabla_{\mathbb{H}^1}u(P),e_\rho(P)\rangle^2+\langle \nabla_{\mathbb{H}^1}u(P),e_\varphi(P)\rangle^2.
 $$
 and
\begin{equation}\label{norm-quad-lapl-horiz-u-two-components_f}
 |\nabla_{\mathbb{H}^1}u(P)|^2= |\nabla^\rho_{\mathbb{H}^1}u(P)|^2+ |\nabla^\varphi_{\mathbb{H}^1}u(P)|^2.
\end{equation}
We may summarize this fact as follows.
\begin{lem}
The couple
 $(\nabla_{\mathbb{H}^1}\rho)(P)$ , $(\nabla_{\mathbb{H}^1}\varphi)(P)$ determines a basis of $H\mathbb{H}^1_P,$ for every $P=(x,y,t),$ such that $x^{2}+y^{2}\neq 0.$
\end{lem}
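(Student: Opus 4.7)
The plan is to observe that the two vectors are nonzero and mutually orthogonal, and then invoke dimension counting. Since $H\mathbb{H}^1_P=\mathrm{span}\{X(P),Y(P)\}$ is two-dimensional, any pair of linearly independent horizontal vectors automatically forms a basis, so it suffices to verify linear independence of $(\nabla_{\mathbb{H}^1}\rho)(P)$ and $(\nabla_{\mathbb{H}^1}\varphi)(P)$.

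By construction, both $\nabla_{\mathbb{H}^1}\rho$ and $\nabla_{\mathbb{H}^1}\varphi$ lie in $H\mathbb{H}^1_P$. Under the hypothesis $x^{2}+y^{2}\neq 0$ (which also forces $\rho\neq 0$ since $\rho=((x^2+y^2)^2+t^2)^{1/4}$), Lemma \ref{raccolta} directly yields
$$\left|\nabla_{\mathbb{H}^{1}}\rho(P)\right|^{2}=\frac{x^{2}+y^{2}}{\rho^{2}}>0,\qquad \left|\nabla_{\mathbb{H}^{1}}\varphi(P)\right|^{2}=\frac{4(x^{2}+y^{2})}{\rho^{4}}>0,$$
so that neither vector vanishes, together with $\langle\nabla_{\mathbb{H}^{1}}\varphi(P), \nabla_{\mathbb{H}^{1}}\rho(P)\rangle=0$. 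Two nonzero mutually orthogonal vectors in a two-dimensional inner product space are linearly independent, and hence span the whole space.

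There is no genuine obstacle here: the lemma is a bookkeeping consequence of Lemma \ref{raccolta}, in which the substantive computation has already been carried out. The only point that must be emphasized is the role of the hypothesis $x^2+y^2\neq 0$, which simultaneously rules out the points where $e_\rho$ and $e_\varphi$ are undefined (because $|\nabla_{\mathbb{H}^1}\rho|$ or $|\nabla_{\mathbb{H}^1}\varphi|$ would vanish) and coincides with the condition that $P$ is not a characteristic point for the gauge level sets $\{|\cdot|_{\mathbb{H}^1}=\mathrm{const}\}$.
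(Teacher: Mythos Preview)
Your proof is correct and follows essentially the same route as the paper: the paper also derives this lemma as a direct consequence of Lemma~\ref{raccolta}, noting that $\langle e_\rho,e_\varphi\rangle=0$ and that both vectors are well defined whenever $x^2+y^2\neq 0$, whence they span the two-dimensional fiber $H\mathbb{H}^1_P$. Your write-up is in fact slightly more explicit in checking non-vanishing via the norm formulas, but the argument is the same.
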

At this point, assuming that $u=\rho^{\alpha}f(\theta,\varphi),$  we compute $\Delta_{\mathbb{H}^1}u$ obtaining the following result.
\begin{lem}\label{lapla}
Let $u=\rho^{\alpha}f(\theta,\varphi),$  then:
\begin{equation*}
\begin{split}
&\Delta_{\mathbb{H}^1}u=\Delta_{\mathbb{H}^1}(\rho^{\alpha}f(\theta,\varphi))=\rho^{\alpha-2}\bigg(\alpha(\alpha+2)(\sin\varphi)f(\theta,\varphi)-2\alpha(\cos\varphi)\frac{\partial f}{\partial \theta}\nonumber\\
&+\frac{1}{\sin\varphi}\frac{\partial^{2}f}{\partial \theta^{2}}+4\sin\varphi\frac{\partial^{2}f}{\partial \varphi \partial \theta}+4\sin\varphi\frac{\partial^{2}f}{\partial \varphi^{2}}+4\cos\varphi\frac{\partial f}{\partial \varphi}\bigg).
\end{split}
\end{equation*}
In particular, if $u=\rho^{\alpha}f(\varphi),$ then
\begin{equation*} \label{lapl-horiz-varphi-boundary-boule-koranyi}
\Delta_{\mathbb{H}^1}u=\rho^{\alpha-2}\bigg(\alpha(\alpha+2)(\sin \varphi)f(\varphi)+4\frac{\partial}{\partial \varphi}\left(\sin\varphi\frac{\partial f}{\partial \varphi}\right)\bigg).
\end{equation*}
\end{lem}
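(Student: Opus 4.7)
The plan is to apply the second-order chain rule for the sub-Laplacian to a composite function of the polar coordinates $(\rho,\varphi,\theta)$, and then to specialize to $F(\rho,\varphi,\theta)=\rho^{\alpha}f(\theta,\varphi)$. Writing $u=F(\rho,\varphi,\theta)$ and using that $X$ and $Y$ are first-order differential operators (so they obey the usual chain rule), one has
\begin{equation*}
\Delta_{\mathbb{H}^{1}}u=\sum_{\xi,\eta\in\{\rho,\varphi,\theta\}}F_{\xi\eta}\,\langle\nabla_{\mathbb{H}^{1}}\xi,\nabla_{\mathbb{H}^{1}}\eta\rangle+\sum_{\xi\in\{\rho,\varphi,\theta\}}F_{\xi}\,\Delta_{\mathbb{H}^{1}}\xi,
\end{equation*}
because $\Delta_{\mathbb{H}^{1}}=X^{2}+Y^{2}$ acts on $F\circ (\rho,\varphi,\theta)$ exactly as the Euclidean Laplacian would, once the horizontal inner products of the gradients of the coordinates replace the Euclidean ones.

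Next I would substitute the six quantities provided by Lemma~\ref{raccolta} and the three horizontal Laplacians listed there. Using $x^{2}+y^{2}=\rho^{2}\sin\varphi$ (immediate from \eqref{pol-coord}), these become
\begin{equation*}
|\nabla_{\mathbb{H}^{1}}\rho|^{2}=\sin\varphi,\quad |\nabla_{\mathbb{H}^{1}}\varphi|^{2}=\tfrac{4\sin\varphi}{\rho^{2}},\quad |\nabla_{\mathbb{H}^{1}}\theta|^{2}=\tfrac{1}{\rho^{2}\sin\varphi},
\end{equation*}
\begin{equation*}
\langle\nabla_{\mathbb{H}^{1}}\rho,\nabla_{\mathbb{H}^{1}}\varphi\rangle=0,\quad\langle\nabla_{\mathbb{H}^{1}}\rho,\nabla_{\mathbb{H}^{1}}\theta\rangle=-\tfrac{\cos\varphi}{\rho},\quad\langle\nabla_{\mathbb{H}^{1}}\varphi,\nabla_{\mathbb{H}^{1}}\theta\rangle=\tfrac{2\sin\varphi}{\rho^{2}},
\end{equation*}
\begin{equation*}
\Delta_{\mathbb{H}^{1}}\rho=\tfrac{3\sin\varphi}{\rho},\quad\Delta_{\mathbb{H}^{1}}\varphi=\tfrac{4\cos\varphi}{\rho^{2}},\quad\Delta_{\mathbb{H}^{1}}\theta=0.
\end{equation*}

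Then I would compute the nine partial derivatives of $F(\rho,\varphi,\theta)=\rho^{\alpha}f(\theta,\varphi)$: namely $F_{\rho}=\alpha\rho^{\alpha-1}f$, $F_{\rho\rho}=\alpha(\alpha-1)\rho^{\alpha-2}f$, $F_{\rho\varphi}=\alpha\rho^{\alpha-1}f_{\varphi}$, $F_{\rho\theta}=\alpha\rho^{\alpha-1}f_{\theta}$, and $F_{\varphi}, F_{\theta}, F_{\varphi\varphi}, F_{\theta\theta}, F_{\varphi\theta}$ obtained by differentiating $f$ only. Plugging everything in, factoring the common $\rho^{\alpha-2}$, and collecting the coefficient of $f$ via $\alpha(\alpha-1)\sin\varphi+3\alpha\sin\varphi=\alpha(\alpha+2)\sin\varphi$, yields exactly the stated expression; the cross term $2F_{\rho\varphi}\langle\nabla_{\mathbb{H}^{1}}\rho,\nabla_{\mathbb{H}^{1}}\varphi\rangle$ drops by orthogonality, the cross term $2F_{\rho\theta}\langle\nabla_{\mathbb{H}^{1}}\rho,\nabla_{\mathbb{H}^{1}}\theta\rangle$ produces $-2\alpha\cos\varphi\, f_{\theta}$, and $2F_{\varphi\theta}\langle\nabla_{\mathbb{H}^{1}}\varphi,\nabla_{\mathbb{H}^{1}}\theta\rangle$ gives the mixed term $4\sin\varphi\, f_{\varphi\theta}$.

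For the second formula, if $f=f(\varphi)$ all $\theta$-derivatives vanish, and the remaining two terms combine as $4\sin\varphi\, f_{\varphi\varphi}+4\cos\varphi\, f_{\varphi}=4\frac{\partial}{\partial\varphi}(\sin\varphi\, f_{\varphi})$, which is precisely the divergence form of the statement. The only real difficulty is bookkeeping: making sure none of the three cross terms is dropped and that the coefficient $\alpha(\alpha+2)$ emerges after summing the $F_{\rho\rho}|\nabla_{\mathbb{H}^{1}}\rho|^{2}$ and $F_{\rho}\Delta_{\mathbb{H}^{1}}\rho$ contributions. Once the chain-rule identity and Lemma~\ref{raccolta} are in hand, the result is a routine, if lengthy, algebraic simplification.
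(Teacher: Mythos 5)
Your proposal is correct and follows essentially the same route as the paper: the paper states Lemma \ref{raccolta} precisely so that the horizontal chain rule $\Delta_{\mathbb{H}^1}(F\circ(\rho,\varphi,\theta))=\sum F_{\xi\eta}\langle\nabla_{\mathbb{H}^1}\xi,\nabla_{\mathbb{H}^1}\eta\rangle+\sum F_{\xi}\Delta_{\mathbb{H}^1}\xi$ can be applied with $x^2+y^2=\rho^2\sin\varphi$, and your term-by-term bookkeeping (including the factor $\alpha(\alpha-1)+3\alpha=\alpha(\alpha+2)$ and the three cross terms) reproduces the stated identity exactly.
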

Thus, whenever $f$ satisfies 
$$
\alpha(\alpha+2)(\sin\varphi)f(\theta,\varphi)-2\alpha(\cos\varphi)\frac{\partial f}{\partial \theta}+\frac{1}{\sin\varphi}\frac{\partial^{2}f}{\partial \theta^{2}}+4\sin\varphi\frac{\partial^{2}f}{\partial \varphi \partial \theta}+4\sin\varphi\frac{\partial^{2}f}{\partial \varphi^{2}}+4\cos\varphi\frac{\partial f}{\partial \varphi}=0
$$
on $ \Gamma\subset \partial B_1^{{\mathbb{H}^1}}(0)$ or 
$$
\alpha(\alpha+2)(\sin \varphi)f(\varphi)+4\frac{\partial}{\partial \varphi}\left(\sin\varphi\frac{\partial f}{\partial \varphi}\right)=0
$$
on $ \Gamma\subset \partial B_1^{{\mathbb{H}^1}}(0)$
for $f$ depending only on $\varphi,$ then $u=\rho^\alpha f(\theta,\varphi)$ is $\mathbb{H}^1$-harmonic in the set
$$
\mathcal{P}_\Gamma:=\{(x,y,t)\in \mathbb{H}^1:\quad (x,y,t)=\delta_{\lambda} (\xi,\eta,\tau),\:\:\lambda>0,\:\: (\xi,\eta,\tau)\in \Gamma \subset \partial B_1^{{\mathbb{H}^1}}(0) \}.
$$
In fact, if $u_\lambda(x,y,t)=u(\delta_\lambda (x,y,t)),$ then whenever $u=\rho^\alpha f(\theta, \phi),$ $u_\lambda =\lambda^\alpha u(x,y,t)$ and if $u$ is $\mathbb{H}^1-$harmonic on $ \Gamma\subset \partial B_1^{{\mathbb{H}^1}}(0),$ we obtain:
\begin{equation}
\begin{split}&\Delta_{\mathbb{H}^1}u_{\lambda}(x,y,t)=\lambda^\alpha\Delta_{\mathbb{H}^1}u(x,y,t)=0.
\end{split}
\end{equation}
For instance, if $\Gamma=\{(x,y,t)\in\partial B_1^{{\mathbb{H}^1}}(0):\quad x^2+y^2<Mt\},$ where $M>0$ is a constant, then

$$
\mathcal{P}_\Gamma=\{(x,y,t)\in \mathbb{H}^1:\quad x^2+y^2<Mt \}.
$$
Moreover, if we add a boundary condition  to the equation 
$$
\alpha(\alpha+2)(\sin\varphi)f(\theta,\varphi)-2\alpha(\cos\varphi)\frac{\partial f}{\partial \theta}+\frac{1}{\sin\varphi}\frac{\partial^{2}f}{\partial \theta^{2}}+4\sin\varphi\frac{\partial^{2}f}{\partial \varphi \partial \theta}+4\sin\varphi\frac{\partial^{2}f}{\partial \varphi^{2}}+4\cos\varphi\frac{\partial f}{\partial \varphi}=0,
$$
by requiring that $f=0$ on $\partial \Gamma,$
then $u=\rho^\alpha f$ satisfies 
$$\left\{\begin{array}{ll}\Delta_{\mathbb{H}^1}u=0,&(x,y,t)\in \mathcal{P}_\Gamma,\\
u=0,&(x,y,t)\in \partial \mathcal{P}_\Gamma.\end{array}\right.$$ 

Of course, if we fix $\Gamma$ and e assume that $f=0$ on $\partial \Gamma$ as well, then the equation  
$$
\alpha(\alpha+2)(\sin\varphi)f(\theta,\varphi)-2\alpha(\cos\varphi)\frac{\partial f}{\partial \theta}+\frac{1}{\sin\varphi}\frac{\partial^{2}f}{\partial \theta^{2}}+4\sin\varphi\frac{\partial^{2}f}{\partial \varphi \partial \theta}+4\sin\varphi\frac{\partial^{2}f}{\partial \varphi^{2}}+4\cos\varphi\frac{\partial f}{\partial \varphi}=0
$$
has a solution only for some particular values of $\alpha.$ Indeed, focusing our attention to the case in which $f$ depends only on $\varphi,$ we clearly obtain the following eigenvalues problem:
$$\left\{\begin{array}{ll}4\left(\sin\varphi f'\right)'=-\lambda_{\varphi_0,\varphi_1}\sin \varphi f,&0\leq \varphi_0<\varphi<\varphi_1\leq \pi,\\
f(\varphi_0)=0=f(\varphi_1),&
\end{array}\right.$$ 
where $\Gamma:=\{(x,y,t)\in \partial B_1^{{\mathbb{H}^1}}(0):\quad \cos\varphi_1<\frac{t}{\sqrt{(x^2+y^2)^2+t^2}}<\cos\varphi_0 \}.$ Hence, the exponent $\alpha$ is related to the first eigenvalue $\lambda_{\varphi_0,\varphi_1}$ via the relationship:
$$\lambda_{\varphi_0,\varphi_1}=\alpha(\alpha+2).$$

On the other hand, the first eigenvalue $\lambda_{\varphi_0,\varphi_1}$ is determined by the Rayleigh quotient given, in this case, by
$$
\lambda_{\varphi_0,\varphi_1}:=\inf_{f\hspace{0.025cm}\in\hspace{0.025cm} H_0^1(\varphi_0,\varphi_1)}\frac{\displaystyle4\int_{\varphi_0}^{\varphi_1}\sin(\varphi)f'(\varphi)^2d\varphi}{\displaystyle\int_{\varphi_0}^{\varphi_1}\sin(\varphi)f(\varphi)^2d\varphi}.
$$

Thus, it is fundamental to know if the result by \cite{FH}, that is the cap on $\partial B_1(0)$ having the same $\mathcal{H}^{n-1}$ measure of some sets $\Sigma$ on $\partial B_1(0)$ has the smallest Rayleigh quotient, is true even in the Heisenberg case. 

Let say that we would like to know if there exists a set $\Gamma^*\subset \partial B_1^{{\mathbb{H}^1}}(0)$ such that for every
$\Gamma\subset \partial B_1^{{\mathbb{H}^1}}(0),$
$$
P_{\mathbb{H}^1}^{B_1^{{\mathbb{H}^1}}(0)}(\Gamma)= P_{\mathbb{H}^1}^{B_1^{{\mathbb{H}^1}}(0)}(\Gamma^*),
$$
it results
$$
\alpha_{\mathbb{H}^1}(\Gamma)\geq \alpha_{\mathbb{H}^1}(\Gamma^*),
$$
where $\alpha_{\mathbb{H}^1}(\Gamma)$ denotes the unique positive solution to the equation
$$
\alpha(\alpha+2)=\lambda(\Gamma),
$$
 $\lambda(\Gamma)$ is the first eigenvalue of the problem
$$\left\{\begin{array}{ll}\mathcal{L}_{\theta,\varphi}f=-\lambda (\Gamma)f& \mbox{in }\Omega\subset \mathbb{R}^2,\\
f=0& \mbox{on }\partial\Omega,\end{array}\right.$$
with
\[ \mathcal{L}_{\theta,\varphi}=\diver_{\theta,\varphi}\left(A(\theta,\varphi)\nabla_{\theta,\varphi}\right)\]
where $T(\Omega)=\Gamma$ and $A(\theta,\varphi)$ is the matrix-valued function 
\begin{equation}\label{matrix-divergence-form}
A(\theta,\varphi)=
\begin{bmatrix}
\displaystyle\frac{1}{\sin\varphi}&(4+2\alpha)\sin\varphi\\
\\
-2\alpha\sin\varphi& 4\sin\varphi
\end{bmatrix}.
\end{equation}

In particular, it holds:
\begin{equation}\label{weak-equation-theta-phi-6}
\lambda (\Gamma)=\inf_{v\hspace{0.025cm}\in\hspace{0.025cm} H_0^1(\Omega_{\theta,\varphi})}\frac{\displaystyle\int\limits_{\Omega_{\theta,\varphi}}\left(\frac{1}{\sin\varphi}\left(\frac{\partial f}{\partial \theta}\right)^{2}+4\sin\varphi\frac{\partial f}{\partial \theta}\frac{\partial f}{\partial \varphi}+4\sin\varphi\left(\frac{\partial f}{\partial \varphi}\right)^{2}\right)\hspace{0.1cm}d\theta d\varphi}{\displaystyle\int\limits_{\Omega_{\theta,\varphi}}(\sin\varphi)f^{2}\hspace{0.1cm}d\theta d\varphi}.
\end{equation}
The existence in the Heisenberg group of the properties of the characteristic number associated with the set $\Gamma$, as far as we know, is still unknown. This part corresponds to the topic discussed in \cite{Sperner} in the Euclidean setting.
In fact, just for having an idea about the difficulty in solving the problem, we remark that  
$$
P_{\mathbb{H}^1}^{B_1^{{\mathbb{H}^1}}(0)}(\Gamma)=\int_{\Omega}\sqrt{\sin(\varphi)}d\theta d\varphi,
$$
where $\Gamma=T(\{1\}\times\Omega).$ At this point, we may decide to symmetrize the set $\Omega$ in many ways. For instance, for every $\varphi,$ we might define $\Omega^*_{\varphi}$ in such a way that
$$
\mathcal{H}^{1}(\Omega^*_\varphi)=2\theta_\varphi=\mathcal{H}^{1}(\Omega_\varphi),
$$ 
and consider $\Omega^*:=\cup_{\varphi\in\Pi_2(\Omega)} \Omega_\varphi^*,$ being $\Pi_2(\Omega):=\{\varphi:\:\:\Omega_\varphi\not=\emptyset\}.$
Unfortunately, the lack of an isoperimetric result does not permit to conclude anything.

What we can do is to give an estimate. In fact, let
\begin{equation}\label{def-lambda-varphixx}
\lambda_{\varphi}(\Sigma):=\inf_{v\hspace{0.025cm}\in\hspace{0.025cm}H^{1}_{0}(\Sigma)}\frac{\displaystyle \int\limits_{\Sigma}\frac{\left|\nabla^{\varphi}_{\mathbb{H}^1}v(\xi)\right|^{2}}{\sqrt{x^{2}+y^{2}}}\hspace{0.1cm}dP^{B_1^{{\mathbb{H}^1}}(0)}_{\mathbb{H}^1}(\xi)}{\displaystyle \int\limits_{\Sigma}v^{2}(\xi)\sqrt{x^{2}+y^{2}}\hspace{0.1cm}dP^{B_1^{{\mathbb{H}^1}}(0)}_{\mathbb{H}^1}(\xi)},
\end{equation}
be the Rayleigh quotient,
where $\Sigma\subset \partial B_1^{\mathbb{H}^1}(0)$ is a rectifiable set,
then the following result holds, see \cite{FeFo}.

\begin{teo}\label{lowerbound_f}
Let $u_1,u_2\in C(B_1^{\mathbb{H}^1}(0))\cap H^1_{\mathbb{H}^1}(B_1^{\mathbb{H}^1}(0))$ be nonnegative, such that  $u_1u_2=0$ in $B_1^{\mathbb{H}^1}(0)$ and $u_i(0)=0,$ $\Delta_{\mathbb{H}^1} u_i\geq 0,$ $i=1,2.$ Then
\begin{equation}\label{general-term-condition-monotonicity-lower-bound}
\sum_{i=1}^2\frac{\displaystyle \int\limits_{\partial B_1^{\mathbb{H}^1}(0)}\frac{\left|\nabla_{\mathbb{H}^1}u_i(\xi)\right|^{2}}{\sqrt{x^{2}+y^{2}}}\hspace{0.1cm}d P^{B_1^{{\mathbb{H}^1}}(0)}_{\mathbb{H}^1}(\xi)}{\displaystyle \int\limits_{B_1^{\mathbb{H}^1}(0)}\frac{\left|\nabla_{\mathbb{H}^1}u_i(\xi)\right|^{2}}{\left|\xi\right|_{\mathbb{H}^1}^{2}}\hspace{0.1cm}d\xi}\geq 2\sum_{i=1}^2\left(\sqrt{1+\lambda_{\varphi}\left(\Sigma_i\right)}-1\right),
\end{equation}
where $\Sigma_i=\partial B_1^{\mathbb{H}^1}(0)\cap \left\{u_i>0\right\}.$
\end{teo}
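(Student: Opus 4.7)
The plan is to transpose the Euclidean argument of Section \ref{Euclidean_setting} to the Heisenberg setting, replacing the spherical decomposition of the gradient with the radial/$\varphi$ decomposition (\ref{norm-quad-lapl-horiz-u-two-components_f}), the Newtonian fundamental solution by $|\xi|_{\mathbb{H}^1}^{2-Q}$, and the spherical Rayleigh quotient by $\lambda_\varphi$ from (\ref{def-lambda-varphixx}). Set $dP := dP_{\mathbb{H}^1}^{B_1^{\mathbb{H}^1}(0)}$ and
$$\xi_i := \left(\int_{\Sigma_i} \frac{|\nabla^\rho_{\mathbb{H}^1} u_i|^2}{\sqrt{x^2+y^2}}\, dP\right)^{1/2}, \qquad \eta_i := \left(\int_{\Sigma_i} u_i^2 \sqrt{x^2+y^2}\, dP\right)^{1/2}.$$

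First I would bound the numerator from below. On $\partial B_1^{\mathbb{H}^1}(0) \setminus \{x^2+y^2=0\}$, (\ref{norm-quad-lapl-horiz-u-two-components_f}) splits $|\nabla_{\mathbb{H}^1} u_i|^2$ into its radial and $\varphi$ parts, and (\ref{def-lambda-varphixx}) gives $\int_{\Sigma_i}|\nabla^\varphi_{\mathbb{H}^1} u_i|^2/\sqrt{x^2+y^2}\,dP \geq \lambda_\varphi(\Sigma_i)\,\eta_i^2$. For any $\beta_i \in (0,1)$, splitting this into $\beta_i$ and $1-\beta_i$ pieces and applying AM-GM to the first piece against the $\nabla^\rho$-term yields
$$\int_{\partial B_1^{\mathbb{H}^1}(0)} \frac{|\nabla_{\mathbb{H}^1} u_i|^2}{\sqrt{x^2+y^2}}\, dP \geq 2\sqrt{\beta_i \lambda_\varphi(\Sigma_i)}\,\xi_i\eta_i + (1-\beta_i)\lambda_\varphi(\Sigma_i)\,\eta_i^2.$$

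Second I would bound the denominator from above. Since $u_i \geq 0$ and $\Delta_{\mathbb{H}^1} u_i \geq 0$, $\Delta_{\mathbb{H}^1}(u_i^2) \geq 2|\nabla_{\mathbb{H}^1} u_i|^2$, so the denominator is at most $\tfrac{1}{2}\int_{B_1^{\mathbb{H}^1}(0)} |\xi|_{\mathbb{H}^1}^{2-Q}\Delta_{\mathbb{H}^1}(u_i^2)\,d\xi$. Two applications of Proposition \ref{divergence}, together with $\Delta_{\mathbb{H}^1}|\xi|_{\mathbb{H}^1}^{2-Q}=0$ away from $0$, $u_i(0)=0$ (which kills the $\delta$-mass at the pole of the fundamental solution), the identity $\nu_{\mathbb{H}^1} = e_\rho$ on $\partial B_1^{\mathbb{H}^1}(0)$, and $|\nabla_{\mathbb{H}^1}\rho|^2_{|\xi|=1} = x^2+y^2$, produce
$$\int_{B_1^{\mathbb{H}^1}(0)} \frac{|\nabla_{\mathbb{H}^1} u_i|^2}{|\xi|_{\mathbb{H}^1}^{Q-2}}\, d\xi \leq \int_{\Sigma_i} u_i\, \langle \nabla_{\mathbb{H}^1} u_i, e_\rho\rangle\, dP + \frac{Q-2}{2}\int_{\Sigma_i} u_i^2 \sqrt{x^2+y^2}\, dP.$$
A weighted Cauchy-Schwarz (pairing $\langle \nabla_{\mathbb{H}^1} u_i,e_\rho\rangle / (x^2+y^2)^{1/4}$ with $u_i(x^2+y^2)^{1/4}$) bounds the first right-hand side term by $\xi_i\eta_i$; since $Q-2=2$, the denominator is at most $\xi_i\eta_i + \eta_i^2$.

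Combining, the $i$-th ratio in (\ref{general-term-condition-monotonicity-lower-bound}) is at least $(2\sqrt{\beta_i\lambda_\varphi(\Sigma_i)} + (1-\beta_i)\lambda_\varphi(\Sigma_i)\,z)/(1+z)$ with $z := \eta_i/\xi_i \in [0,\infty)$, whose infimum over $z$ is $\min\{2\sqrt{\beta_i\lambda_\varphi(\Sigma_i)},\,(1-\beta_i)\lambda_\varphi(\Sigma_i)\}$. Setting $\alpha_i := \sqrt{\beta_i\lambda_\varphi(\Sigma_i)}$ and choosing $\beta_i \in (0,1)$ so as to equalize the two arguments of the minimum produces the quadratic $\alpha_i^2 + 2\alpha_i - \lambda_\varphi(\Sigma_i) = 0$, with positive root $\alpha_i = -1 + \sqrt{1+\lambda_\varphi(\Sigma_i)}$; this is precisely the characteristic exponent in the $\mathbb{H}^1$-harmonic ansatz $\rho^{\alpha_i} f(\varphi)$ of Lemma \ref{lapla}. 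The common value $2\alpha_i = 2(\sqrt{1+\lambda_\varphi(\Sigma_i)}-1)$ summed over $i=1,2$ yields (\ref{general-term-condition-monotonicity-lower-bound}). The main technical obstacle is rigorously justifying the double integration by parts with the singular weight $|\xi|_{\mathbb{H}^1}^{2-Q}$: one excises a small Koranyi ball $B_\varepsilon^{\mathbb{H}^1}(0)$, performs the computation on $B_1^{\mathbb{H}^1}(0)\setminus B_\varepsilon^{\mathbb{H}^1}(0)$, and passes to the limit as $\varepsilon \to 0^+$, using $u_i(0)=0$ and the Sobolev regularity of $u_i$ to annihilate the inner-boundary contribution; the characteristic points of $\partial B_1^{\mathbb{H}^1}(0)$ cause no issue because by Proposition \ref{perimetro regolare} the perimeter measure itself carries a $\sqrt{x^2+y^2}$ factor that absorbs the $1/\sqrt{x^2+y^2}$ appearing in all boundary integrals.
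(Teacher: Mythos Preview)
Your proposal is correct and follows essentially the same route as the paper's sketch: the orthogonal splitting (\ref{norm-quad-lapl-horiz-u-two-components_f}), the Rayleigh bound $A_\varphi\ge\lambda_\varphi(\Sigma_i)A_u$, the fundamental-solution integration by parts giving the denominator bound $A_u^{1/2}A_\rho^{1/2}+A_u$ (with $(Q-2)/2=1$ in $\mathbb{H}^1$), and the optimization producing the quadratic $\alpha^2+2\alpha-\lambda_\varphi(\Sigma_i)=0$ are exactly the ingredients the paper indicates, with details deferred to \cite{FeFo}. Your explicit handling of the weighted Cauchy--Schwarz and of the $\varepsilon$-excision near the pole just fills in what the paper leaves implicit.
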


In fact, recalling the particular structure described in  (\ref{norm-quad-lapl-horiz-u-two-components_f}), we have:
\begin{equation*}
 |\nabla_{\mathbb{H}^1}u|^2= |\nabla^\rho_{\mathbb{H}^1}u|^2+ |\nabla^\varphi_{\mathbb{H}^1}u|^2. 
\end{equation*}
Thus, as well as in the Euclidean setting, we obtain
the following lower bound for each function $u:=u_i,$ $i=1,2$:
\[ \frac{\displaystyle \int\limits_{\partial B_1^{\mathbb{H}^1}(0)}\frac{\left|\nabla_{\mathbb{H}^1}u(\xi)\right|^{2}}{\sqrt{x^{2}+y^{2}}}\hspace{0.1cm}dP_{\mathbb{H}^1}(\xi)}{\displaystyle \int\limits_{B_1^{\mathbb{H}^1}(0)}\frac{\left|\nabla_{\mathbb{H}^1}u(\xi)\right|^{2}}{\left|\xi\right|_{\mathbb{H}^1}^{2}}\hspace{0.1cm}d\xi}\geq \frac{A_{\rho}+A_{\varphi}}{A_{u}+A_{u}^{1/2}A_{\rho}^{1/2}},\]
where
\begin{equation}\label{def-A-rho-A-varphi-A-u_x}
\begin{split}
A_{\rho}:=\int\limits_{\partial B_1^{\mathbb{H}^1}(0)}\frac{\left|\nabla_{\mathbb{H}^1}^\rho u(\xi)\right|^{2}}{\sqrt{x^{2}+y^{2}}}&\hspace{0.1cm}dP_{\mathbb{H}^1}(\xi),\quad
A_{\varphi}:=\int\limits_{\partial B_1^{\mathbb{H}^1}(0)}\frac{\left|\nabla_{\mathbb{H}^1}^\varphi u(\xi)\right|^{2}}{\sqrt{x^{2}+y^{2}}}\hspace{0.1cm}dP_{\mathbb{H}^1}(\xi),\\
&A_{u}:=\int\limits_{\partial B_1^{\mathbb{H}^1}(0)}u^{2}(\xi)\sqrt{x^{2}+y^{2}}\hspace{0.1cm}dP_{\mathbb{H}^1}(\xi),
\end{split}
\end{equation}

having denoted the perimeter measure $P^{B_1^{{\mathbb{H}^1}}(0)}_{\mathbb{H}^1}$ simply by $P_{\mathbb{H}^1}$ and being $\xi=(x,y,t)\in \mathbb{H}^1.$
Hence, recalling the definition (\ref{def-lambda-varphixx}) we conclude. In addition, it results that 
$\sqrt{1+\lambda_{\varphi}\left(\Sigma_i\right)}-1$ is the positive solution of $\alpha^2+2\alpha-\lambda_{\varphi}\left(\Sigma_i\right)=0,$
  see \cite{FeFo} for the details.
\section{Friedland-Hayman result and the existence of a monotonicity formula in Heisenberg group }\label{lackofisoperimetric}
In \cite{FH}, it is proved that for every $\Sigma\subset \partial B_1(0)$ such that $\mathcal{H}^{m-1}(\Sigma)=\mathcal{H}^{m-1}(\Sigma^*_{\varphi_0}),$ where $\Sigma^*_{\varphi_0}$ is a cap with width $\varphi_0,$ if  $f\in H^1_0(\partial B_1(0))$  and $B_1(0)\subset \mathbb{R}^m$ is the Euclidean ball of radius one centered at $0,$ then 
$$
\frac{\displaystyle\int_{\Sigma}\mid\nabla f\mid^2d\sigma}{\displaystyle\int_{\Sigma}f^2d\sigma}\geq H(f^*)=\frac{\displaystyle\int_{\Sigma^*_{\varphi_0}}\mid\nabla f^*\mid^2d\sigma}{\displaystyle\int_{\Sigma^*_{\varphi_0}}(f^*)^2d\sigma}=\frac{\displaystyle\int_{0}^{\varphi_0}(f^*)'(\varphi)^2\sin^{m-2}(\varphi)d\varphi}{\displaystyle\int_{0}^{\varphi_0}f^*(\varphi)^2\sin^{m-2}(\varphi)d\varphi}\geq H(F)=\lambda_E(\Sigma^*_{\varphi_0}) 
$$
with $F$ the solution of the eigenvalues problem
$$
\left\{\begin{array}{ll}
F''+(m-2)\cot(\varphi)F'+\lambda_E(\Sigma^*_{\varphi_0}) F=0,&\varphi\in ]0,\varphi_0[,\\
F(\varphi_0)=0,&\\
F'(0)=0,&
\end{array}
\right.
$$
where $\lambda_E(\Sigma^*_{\varphi_0})$ is the first eigenvalue and
$\varphi_0 \in [0,\pi],$ see Lemma 1 in \cite{FH}. Moreover, see Lemma 2 in \cite{FH}, the function $w=\rho^{\alpha(\Sigma^*_{\varphi_0})}F$ is harmonic in the Euclidean cone  having as a cap $\Sigma^*_{\varphi_0}$ on $\partial B_1(0),$ with opening $\varphi_0,$ and where $\alpha(\Sigma^*_{\varphi_0})$ is the characteristic number associated with $\Sigma^*_{\varphi_0},$ always in the Euclidean framework.  
In case $m=3,$ we exactly obtain
$$
\lambda_{0,\varphi_0}=4\lambda_E(\Sigma_{\varphi_0}^*),
$$ 
so that the relationship with $\alpha_{\mathbb{H}^1}$ becomes
$$
\alpha(\alpha+2)=4\lambda_E(\Sigma_{\varphi_0}^*)
$$
and
$$
\alpha_{\mathbb{H}^1}(\Sigma_{\varphi_0}^{\partial B_1^{\mathbb{H}^1}(0)})=\sqrt{1+4\lambda_E(\Sigma_{\varphi_0}^*)}-1.
$$
Thus, we can deduce that the minimum is realized when $\varphi_0=\frac{\pi}{2},$ and since $\lambda_E(\Sigma_{\frac{\pi}{2}}^*)=2,$ we conclude that $\alpha_{\mathbb{H}^1}(\Sigma_{\frac{\pi}{2}}^{\partial B_1^{\mathbb{H}^1}(0)})=2.$ Furthermore,
$$
\sqrt{1+4\lambda_E(\Sigma_{\varphi_0}^*)}-1\leq \sqrt{1+4\lambda_E(\Sigma)}-1,$$
nevertheless we can not conclude, in general, that $\sqrt{1+4\lambda_E(\Sigma)}-1\leq \sqrt{1+\lambda_{\mathbb{H}^1}(\Sigma)}-1,$ except when the cap on $\partial B_1^{\mathbb{H}^1}(0)$ depends only on $\varphi_0.$ 
In this particular case, for functions having these caps depending only on $\varphi,$ by choosing $\beta=8$ in (\ref{monofondformula}), we conclude that $\frac{J'_{\beta,\mathbb{H}^1}(1)}{J_{\beta,\mathbb{H}^1}(1)}\geq 0$ and Lemma \ref{lemmaapplicant} applies. Indeed, by straightforward  computation, by considering the function $u=\alpha t^+-\beta t^-,$ for some positive numbers $\alpha, \beta,$ and denoting
$$
J_{8,\mathbb{H}^1}^{\alpha t^+-\beta t^-}(r)=r^{-8}\int_{B_r^{{\mathbb{H}^1}}(0)}\frac{\mid\nabla_{\mathbb{H}^n} t^+\mid^2}{|\zeta |_{\mathbb{H}^1}^{2}}d\zeta\int_{B_r^{{\mathbb{H}^1}}(0)}\frac{\mid\nabla_{\mathbb{H}^1} t^-\mid^2}{|\zeta |_{\mathbb{H}^1}^{2}}d\zeta,
$$
it results that $$\frac{d}{dr}J_{8,\mathbb{H}^1}^{\alpha t^+-\beta t^-}(r)=0.$$

This is perfectly coherent with the fact that  $(0,0,0)$ is a characteristic point of the surface  $\{(x,y,t)\in B_1^{{\mathbb{H}^1}}(0):\quad t=0\},$ so that in $(0,0,0)$ it holds $\nabla_{\mathbb{H}^1}t_{\mid (0,0,0)}\equiv (0,0),$ as well as we know that
$$\frac{d}{dr}J_{8,\mathbb{H}^1}^{\alpha t^+-\beta t^-}(r)=0=\mid\nabla_{\mathbb{H}^1}\alpha t^{+}_{\mid (0,0,0)}\mid \mid\nabla_{\mathbb{H}^1}\beta t^{-}_{\mid (0,0,0)}\mid.$$

We recall that in this case:
$$\{(x,y,t)\in B_1^{{\mathbb{H}^1}}(0):\quad \alpha t^+-\beta t^-=0\}=\{(x,y,t)\in B_1^{{\mathbb{H}^1}}(0):\quad t=0\}.$$  
We observe here, as a by-product, that $\alpha t^+-\beta t^-$ can not be a classical solution of (\ref{two_phase_Heisenberg}), since the free boundary condition is not fulfilled in $(0,0,0).$
Moreover, we remark that this situation corresponds to the case in which the Koranyi ball is split in two parts separated by the plane $\{(x,y,t)\in \mathbb{H}^1,\quad t=0\}.$

On the other hand, whenever we fix $\alpha,\beta \geq 0$  such that $\alpha^2-\beta^2=1,$ the function $u=\alpha x^+-\beta x^-$ is a solution of the two-phase problem (\ref{two_phase_Heisenberg}), as well as in the Euclidean case, but in this case, testing $J_{8,\mathbb{H}^1}'$ on this function, we would get that the differential $J_{8,\mathbb{H}^1}'$ is negative, loosing in this way the desired monotonicity property of our formula, see \cite{FeFo}.

In fact, the following result holds.
\begin{lem}\label{lemma-integral-quotient-gradient-x-+}
	For every $a,b\in\mathbb{R},$ such that $a\not =0$ or $b\not=0,$  let $u=(ax+by)^+,$ defined in $B_1^{\mathbb{H}^1}(0).$  Then
	\[\frac{\displaystyle \int\limits_{\partial B_1^{\mathbb{H}^1}(0)}\frac{\left|\nabla_{\mathbb{H}^1}u\right|^2}{\sqrt{x^2+y^2}}\hspace{0.1cm}dP_{\mathbb{H}^1}(\xi)}{\displaystyle \int\limits_{ B_1^{\mathbb{H}^1}(0)}\frac{\left|\nabla_{\mathbb{H}^1}u\right|^2}{\left|\xi\right|_{\mathbb{H}^1}^2}\hspace{0.1cm}d\xi}=2.\]
\end{lem}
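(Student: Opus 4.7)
The plan is a direct computation in the polar-type coordinates $T(\rho,\varphi,\theta)$ of \eqref{pol-coord}, taking advantage of the fact that the horizontal gradient of a horizontal linear form is constant.

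The starting observation is that $X(ax+by)=a$ and $Y(ax+by)=b$, so $|\nabla_{\mathbb{H}^1}(ax+by)|^{2}=a^{2}+b^{2}$ identically on $\mathbb{H}^1$, and therefore
\[
|\nabla_{\mathbb{H}^1}u|^{2}=(a^{2}+b^{2})\,\chi_{\{ax+by>0\}}.
\]
The common factor $a^{2}+b^{2}\neq 0$ cancels in the ratio, so it remains to compute the two pure-measure integrals of $(\sqrt{x^{2}+y^{2}})^{-1}\,dP_{\mathbb{H}^1}$ and $|\xi|_{\mathbb{H}^1}^{-2}\,d\xi$ over the intersections of $\{ax+by>0\}$ with the unit Koranyi sphere and the unit Koranyi ball, respectively.

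Next I would change variables via $T$. A direct expansion of the $3\times 3$ determinant (whose cross terms collapse by $\sin^{2}\varphi+\cos^{2}\varphi=1$) gives Jacobian $\rho^{3}$, i.e.\ $d\xi=\rho^{3}\,d\rho\,d\varphi\,d\theta$; this is also forced by the scaling $\delta_\lambda^{*}d\xi=\lambda^{Q}d\xi$ with $Q=4$. Combined with the identity $dP_{\mathbb{H}^1}=\sqrt{\sin\varphi}\,d\theta\,d\varphi$ on $\partial B_1^{\mathbb{H}^1}(0)$ recalled in Section \ref{lackofisoperimetric}, and with the identities $|\xi|_{\mathbb{H}^1}=\rho$ and $\sqrt{x^{2}+y^{2}}=\rho\sqrt{\sin\varphi}$, both integrands become essentially constant. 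The half-space condition $ax+by>0$ translates, since $\sqrt{\sin\varphi}\geq 0$ and $\rho>0$, into $a\cos\theta+b\sin\theta>0$, i.e.\ $\cos(\theta-\theta_{0})>0$ for some $\theta_{0}$; hence it cuts out a subset $\Theta\subset[0,2\pi]$ of Lebesgue measure $\pi$, uniformly in $\rho$ and $\varphi$.

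Plugging in, the numerator collapses to $(a^{2}+b^{2})\int_{\Theta}d\theta\int_{0}^{\pi}d\varphi=\pi^{2}(a^{2}+b^{2})$ (the two $\sqrt{\sin\varphi}$ factors cancel on the sphere), while the denominator becomes $(a^{2}+b^{2})\int_{\Theta}d\theta\int_{0}^{\pi}d\varphi\int_{0}^{1}\rho\,d\rho=\tfrac{1}{2}\pi^{2}(a^{2}+b^{2})$; the ratio is then exactly $2$. I do not foresee any real difficulty: the proof is a bookkeeping exercise, the only non-trivial input being the Jacobian of $T$, whose value $\rho^{3}$ is dictated by dilation homogeneity and can be checked by a short direct determinant computation.
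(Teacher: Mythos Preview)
Your computation is correct. The paper itself does not include a proof of this lemma---it is stated and then immediately used to prove Theorem~\ref{corcor1}, with the details deferred to \cite{FeFo}---so there is no in-paper argument to compare against. That said, the direct calculation you outline (constant horizontal gradient, passage to the coordinates \eqref{pol-coord}, Jacobian $\rho^{3}$, the identity $dP_{\mathbb{H}^1}=\sqrt{\sin\varphi}\,d\theta\,d\varphi$ from Section~\ref{lackofisoperimetric}, and the reduction of $\{ax+by>0\}$ to a $\theta$-half-circle of length $\pi$) is exactly the natural route and matches the tools the paper sets up; the numerator $\pi^{2}(a^{2}+b^{2})$ and denominator $\tfrac12\pi^{2}(a^{2}+b^{2})$ you obtain are correct, giving the ratio $2$.
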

As a consequence of previous Lemma \ref{lemma-integral-quotient-gradient-x-+} and Lemma \ref{lemmaapplicant}, we obtain the proof of Theorem \ref{corcor1}.  
\begin{proof}[Proof of Theorem \ref{corcor1}]
From Lemma \ref{lemmaapplicant} we know that $\frac{J_{\beta,\mathbb{H}^1}'(1)}{J_{\beta,\mathbb{H}^1}(1)}\geq 0$ if and only if
\begin{equation}\label{FFF1}
 \frac{\int_{\partial B_1^{{\mathbb{H}^1}}(0)}\frac{\mid\nabla_{\mathbb{H}^1} u_1(\kappa)\mid^2}{\sqrt{x^2+y^2}}dP^{B_1^{{\mathbb{H}^1}}(0)}_{\mathbb{H}^1}(\kappa)}{\int_{B_1^{{\mathbb{H}^1}}(0)}\frac{\mid\nabla_{\mathbb{H}^1} u_1(\kappa)\mid^2}{|\kappa |_{\mathbb{H}^1}^{2}}d\kappa}+\frac{\int_{\partial B_1^{{\mathbb{H}^1}}(0)}\frac{\mid\nabla_{\mathbb{H}^1} u_2(\kappa)\mid^2}{\sqrt{ x^2+ y^2}}dP^{B_1^{{\mathbb{H}^1}}(0)}_{\mathbb{H}^1}(\kappa)}{\int_{B_1^{{\mathbb{H}^1}}(0)}\frac{\mid\nabla_{\mathbb{H}^1} u_2(\kappa)\mid^2}{|\kappa |_{\mathbb{H}^1}^{2}}d\kappa}-\beta\geq 0.
\end{equation}
Let now $u_1=(ax+by)^+$ and $u_2=(ax+by)^-,$ defined for every $a,b\in\mathbb{R},$ such that $a\not =0$ or $b\not=0.$ Then, We invoke Lemma  \ref{lemma-integral-quotient-gradient-x-+}, concluding that
\begin{equation}\label{FFF2}
 \frac{\int_{\partial B_1^{{\mathbb{H}^1}}(0)}\frac{\mid\nabla_{\mathbb{H}^1} u_1(\kappa)\mid^2}{\sqrt{x^2+ y^2}}dP^{B_1^{{\mathbb{H}^1}}(0)}_{\mathbb{H}^1}(\kappa)}{\int_{B_1^{{\mathbb{H}^1}}(0)}\frac{\mid\nabla_{\mathbb{H}^1} u_1(\kappa)\mid^2}{|\kappa |_{\mathbb{H}^1}^{2}}d\kappa}+\frac{\int_{\partial B_1^{{\mathbb{H}^1}}(0)}\frac{\mid\nabla_{\mathbb{H}^1} u_2(\kappa)\mid^2}{\sqrt{x^2+ y^2}}dP^{B_1^{{\mathbb{H}^1}}(0)}_{\mathbb{H}^1}(\kappa)}{\int_{B_1^{{\mathbb{H}^1}}(0)}\frac{\mid\nabla_{\mathbb{H}^1} u_2(\kappa)\mid^2}{|\kappa |_{\mathbb{H}^1}^{2}}d\kappa}=4.
\end{equation}
 Thus, if $\beta>4,$ then $u_1=(ax+by)^+$ and $u_2=(ax+by)^-$ satisfy the hypotheses of  Lemma \ref{lemmaapplicant}, but  $\frac{J_{\beta,\mathbb{H}^1}'(1)}{J_{\beta,\mathbb{H}^1}(1)}< 0$ when $J_{\beta,\mathbb{H}^1}$ is tested on $u_1=(ax+by)^+$ and $u_2=(ax+by)^-.$ Hence, in order to preserve the increasing monotonicity of $J_{\beta,\mathbb{H}^1},$ from (\ref{FFF2}) we are forced to suppose that $\beta\leq 4.$ 
\end{proof}


\begin{thebibliography}{30}
  \bibitem{ACF} W. Alt, L. Caffarelli, A. Friedman; Variational problems with two phases and their free boundaries, Trans. Amer. Math. Soc. 282 (1984), no. 2, 431--461. 
  \bibitem{ACS1} I. Athanasopoulos, L. Caffarelli, S. Salsa;  Caloric functions in Lipschitz domains and the regularity of solutions to phase transition problems. Ann. of Math. (2) 143 (1996), no. 3, 413--434.
  \bibitem{BKP} W. Beckner, C. Kenig, J. Pipher; unpublished.
 \bibitem{BLU} A. Bonfiglioli, E. Lanconelli, F. Uguzzoni; Stratified Lie groups and potential theory for their sub-Laplacians. Springer Monographs in Mathematics. Springer, Berlin, 2007. 
 \bibitem{Biri} I. Birindelli; Superharmonic functions in the Heisenberg group: estimates and Liouville theorems, {\it Nonlinear differ. equ. appl.} \textbf{10} (2003) 171--185.
   \bibitem{BL} H. J. Brascamp, E. H. Lieb, On extensions of the Brunn-Minkowski and Pr\`ekopa-Leindler Theorems, including inequalities for log concave functions, and with an application to the diffusion equation, J. Funct. Anal. 22 (1976), 366--389. 
   \bibitem{C1} L. A.  Caffarelli;  A Harnack inequality approach to the regularity of free boundaries. I. Lipschitz free boundaries are $C^{1,\alpha}$. Rev. Mat. Iberoamericana 3 (1987), no. 2, 139--162.
 \bibitem{C3} L. A.  Caffarelli; A Harnack inequality approach to the regularity of free boundaries. III. Existence theory, compactness, and dependence on X. Ann. Scuola Norm. Sup. Pisa Cl. Sci. (4) 15 (1988), no. 4, 583--602 (1989).
\bibitem{CJK} L.A. Caffarelli,  D. Jerison, C. E. Kenig; Some new monotonicity theorems with applications to free boundary problems, Ann. of Math. (2) 155 (2002), no. 2, 369--404. 
\bibitem{CS} L. Caffarelli, S. Salsa : A geometric approach to free boundary problems. Graduate Studies in Mathematics, 68. American Mathematical Society, Providence, RI, 2005.
\bibitem{capdangar} L. Capogna, D. Danielli, D., N. Garofalo; The geometric Sobolev embedding for vector fields and the isoperimetric inequality. Commun. Anal. Geom. 2(2), 203--215 (1994).
\bibitem{CDPT}
L. Capogna, D. Danielli, S. Pauls, J. Tyson; An introduction to the Heisenberg group and the sub-Riemannian isoperimetric problem. Progress in Mathematics, 259. Birkhäuser Verlag, Basel, 2007
\bibitem{DGP_ob} D. Danielli, N. Garofalo, A. Petrosyan; The sub-elliptic obstacle problem: $C^{1,a}$ regularity of the free boundary in Carnot groups of step two. Adv. Math. 211 (2007), no. 2, 485--516.
\bibitem{DGS} D. Danielli, N. Garofalo, S. Salsa; Variational inequalities with lack of ellipticity. I. Optimal interior regularity and non-degeneracy of the free boundary. Indiana Univ. Math. J. 52 (2003), no. 2, 361--398.
\bibitem{DFS_APDE} D. De Silva, F. Ferrari, S. Salsa; Two-phase problems with distributed sources: regularity of the free boundary. Anal. PDE 7 (2014), no. 2, 267--310.
\bibitem{DiKar} S. Dipierro, A. L. Karakhanyan; A new discrete monotonicity formula with application to a two-phase free boundary problem in dimension two. Comm. Partial Differential Equations 43 (2018), no. 7, 1073--1101.

\bibitem{Fe} 
A. Dzhugan, F. Ferrari; Domain variation solutions for degenerate elliptic operators, preprint (2019).

\bibitem{Folland} G.B. Folland; Subelliptic estimates and function spaces on nilpotent Lie groups. Ark. Mat. 13 (1975), no. 2, 161--207. 

\bibitem{FeFo} 
F. Ferrari, N. Forcillo; Some remarks about the existence of an Alt-Caffarelli-Friedman monotonicity formula in the Heisenberg group, preprint (2019).
\bibitem{FS_p} F. Ferrari, S. Salsa; Regularity of the solutions for parabolic two-phase free boundary problems. Comm. Partial Differential Equations 35 (2010), no. 6, 1095--1129.
\bibitem{FeVa} 
F. Ferrari, E. Valdinoci; Density estimates for a fluid jet model in the Heisenberg group,
Journal of Mathematical Analysis and Applications 382 (1), 448--468.
\bibitem{FSSC_houston} B. Franchi, B., R. Serapioni, R., F.S. Cassano; Meyers-Serrin type theorems and relaxation of variational integrals depending on vector fields. Houston J. Math. 22(4), 859--890 (1996). 
\bibitem{FSSC_step2} B. Franchi, B., R. Serapioni, R., F.S. Cassano; On the structure of finite perimeter sets in step 2 Carnot groups. J. Geom. Anal. 13(3-), 421--466 (2003). 
\bibitem{FSSC_CAG} B. Franchi, B., R. Serapioni, R., F.S. Cassano; Regular hypersurfaces, intrinsic perimeter and implicit function theorem in Carnot groups. Commun. Anal. Geom. 11(5), 909--944 (2003). 
 \bibitem{FH} S. Friedland, W. K.  Hayman;
Eigenvalue inequalities for the Dirichlet problem on spheres and the growth of subharmonic functions, 
Comment. Math. Helv. 51 (1976), no. 2, 133--161.

\bibitem{GN} N. Garofalo, N., D.-M Nhieu; Isoperimetric and Sobolev inequalities for Carnot-Carath\'eodory spaces and the existence of minimal surfaces. Commun. Pure Appl. Math. 49(10), 1081--1144 (1996).

\bibitem{Garofalo_Lanconelli}  N. Garofalo, E. Lanconelli; Frequency functions on the Heisenberg group, the uncertainty principle and unique continuation, Ann. Inst. Fourier (Grenoble) 40 (1990), no. 2, 313--356.

\bibitem{GT} D. Gilbarg, N. S. Trudinger; Elliptic partial differential equations of second order. Reprint of the 1998 edition. Classics in Mathematics. Springer-Verlag, Berlin, 2001.

\bibitem{Garofalo_Rotz} N. Garofalo,  K. Rotz;
Properties of a frequency of Almgren type for harmonic functions in Carnot groups,
Calc. Var. Partial Differential Equations 54 (2015), no. 2, 2197--2238. 
 \bibitem{Greiner} P. C. Greiner;
Spherical harmonics on the Heisenberg group, Canad. Math. Bull. 23 (1980), no. 4, 383--396 
\bibitem{HO} W. K. Hayman; E. L. Ortiz; An upper bound for the largest zero of Hermite's function with applications to subharmonic functions.
Proc. Roy. Soc. Edinburgh Sect. A 75 (1975/76), no. 3, 182-197.
 \bibitem{Jerison} D. S. Jerison; The Dirichlet problem for the Kohn Laplacian on the Heisenberg group, II. {\it J. Functional Analysis} \textbf{43} (1981), no. 2, 224--257.
\bibitem{magnani} V. Magnani; Differentiability and area formula on stratified Lie groups. Houston J. Math. 27(2), 297--323 (2001).
\bibitem{MP} N. Matevosyan, A. Petrosyan;  Almost monotonicity formulas for elliptic and parabolic operators with variable coefficients. Comm. Pure Appl. Math. 64 (2011), no. 2, 271--311.

\bibitem{NTV} B. Noris, H. Tavares, S.Terracini, G. Verzini; Uniform H\"older bounds for nonlinear Schr\"odinger systems with strong competition. Comm. Pure Appl. Math. 63 (2010), no. 3, 267--302.

\bibitem{Q} V. Quitalo; A free boundary problem arising from segregation of populations with high competition. Arch. Ration. Mech. Anal. 210 (2013), no. 3, 857--908.
\bibitem{Sperner} E. Sperner jr; Zur symmetrisierung von funktionen auf sph\"aren, Math. Z. 134 (1973), 317-327.

\bibitem{TTV} S. Terracini, G. Tortone, S. Vita;  On $s$-harmonic functions on cones. Anal. PDE 11 (2018), no. 7, 1653--1691.
\bibitem{TVZ} S. Terracini, G. Verzini, A. Zilio; Uniform Hölder bounds for strongly competing systems involving the square root of the laplacian. J. Eur. Math. Soc. (JEMS) 18 (2016), no. 12, 2865--2924.

\bibitem{TZ} E. V. Teixeira,  L. Zhang;  Monotonicity theorems for Laplace Beltrami operator on Riemannian manifolds. Adv. Math. 226 (2011), no. 2, 1259--1284.
\end{thebibliography}
\end{document}